\documentclass{amsart}
\usepackage{graphicx} 
\usepackage{amsmath}
\usepackage{amssymb}
\usepackage{amsthm}
\usepackage{mathrsfs}
\usepackage{amsfonts}
\usepackage{xcolor}
\usepackage[colorlinks = true, linkcolor = blue, citecolor=blue]{hyperref}
\usepackage[margin=1in]{geometry}
\usepackage{enumitem}
\usepackage{tikz}
\usepackage{comment}
\usepackage{stmaryrd}
\usepackage[OT2,T1]{fontenc}
\usepackage{cleveref}
\usepackage[normalem]{ulem}
\usepackage{makecell}


\newcommand{\Q}{\mathbb{Q}}

\newcommand{\F}{\mathbb{F}}

\renewcommand{\P}{\mathbb{P}}

\renewcommand{\O}{\mathcal{O}}
\newcommand{\Span}[1]{\langle #1 \rangle}
\newcommand{\sm}{\mathrm{sm}}

\renewcommand{\v}{\beta}
\newcommand{\w}{\gamma}
\newcommand{\z}{\delta}
\newcommand{\vvar}{B}
\newcommand{\wvar}{\Gamma}
\newcommand{\zvar}{\Delta}
\newcommand{\bv}{\boldsymbol{\v}}
\newcommand{\bw}{\boldsymbol{\w}}
\newcommand{\bz}{\boldsymbol{\z}}
\newcommand{\fvwz}{f_{\bv, \bw, \bz}}
\newcommand{\lmax}{\ell_{\max}}

\DeclareMathOperator{\GL}{GL}

\DeclareMathOperator{\Res}{Res}

\newtheorem{theoremAlph}{Theorem}

\newtheorem{theorem}{Theorem}[section]

\newtheorem*{conjecture*}{Conjecture}
\newtheorem{corollary}[theorem]{Corollary}
\newtheorem{lemma}[theorem]{Lemma}
\newtheorem{proposition}[theorem]{Proposition}
\theoremstyle{definition}
\newtheorem{remark}[theorem]{Remark}
\newtheorem{definition}[theorem]{Definition}
\newtheorem{example}[theorem]{Example}
\numberwithin{equation}{section}
\numberwithin{table}{section}


\newcommand{\qbound}[1]{{\color{cyan}$q > {#1}$}}
\newcommand{\qboundgeq}[1]{{\color{cyan}$q \geq {#1}$}}

\renewcommand{\qbound}[1]{{$q > {#1}$}}
\renewcommand{\qboundgeq}[1]{{$q \geq {#1}$}}

\title{Effective Bertini theorems and zeros of $p$-adic forms of degrees 7 and 11}

\author{Lea Beneish}
\address{Lea Beneish, Department of Mathematics, University of North Texas, Denton, TX, USA}
\email{lea.beneish@unt.edu}

\author{Christopher Keyes}
\address{Christopher Keyes, Department of Mathematics, King's College London, London, UK\newline \indent \hspace{1ex} and Heilbronn Institute for Mathematical Research, Bristol, UK}
\email{christopher.keyes@kcl.ac.uk}

\begin{document}

\begin{abstract}
    We establish an effective Bertini-type theorem for hypersurfaces $X_f \colon f = 0$ defined over a finite field $k$ for which $f$ has no linear factors over the algebraic closure $\overline{k}$. Given a line $L$ defined over $k$ and a nonreduced $\overline{k}$-point $x$ on $X_f \cap L$, we give an upper bound on the number of planes $P$ containing $L$ for which $X_f \cap P$ contains a line through $x$. Underlying this result is a factorization algorithm for bivariate polynomials originally due to Kaltofen, which we present with slightly relaxed hypotheses. Our primary application is to Artin's conjecture on $p$-adic forms of prime degree $d$: if $K/\Q_p$ is a finite extension with residue field isomorphic to $\F_q$ and $F \in K[x_0, \ldots, x_{d^2}]$ is homogeneous of degree $d$, the conjecture states $F$ has a nontrivial zero in $K$. We show this conjecture holds whenever \qbound{679} for $d=7$ and \qbound{7393} for $d=11$, improving upon a result of Wooley.
\end{abstract}

\maketitle

\section{Introduction}

In this work, we prove an effective Bertini-type theorem with a focus on ruling out certain factorizations in planar slices of a hypersurface in $\P^n$. Over a finite field $\F_q$ for $q$ sufficiently large, this ensures 
existence of a plane with the desired properties. We give an application of this to a conjecture of Artin concerning zeros of forms over $p$-adic fields.

\subsection{On Bertini theorems}
The classical Bertini theorem states that given a smooth projective variety $X \hookrightarrow \mathbb{P}^n$ over a field $k$, a generic hyperplane $H$ of $\mathbb{P}^n$ has the property that $H\cap X$ is smooth. In fact, there are several versions of this type of theorem where the adjective ``smooth'' is replaced with some other property of $X$, and a sufficiently general hyperplane intersected with $X$ is guaranteed to have the same property. If $k$ is infinite, the genericity of such a hyperplane is sufficient to guarantee the existence of one such hyperplane, but when $k = \F_q$ this need not be the case. One way to handle this is to replace the hyperplane $H$ by a hypersurface of higher degree: Poonen showed for $X$ a smooth quasiprojective subscheme of $\mathbb{P}^n$, a positive density of hypersurfaces have smooth intersections with $X$ \cite{Poonen_Bertini}.

In another direction, Kaltofen \cite{Kaltofen} gave an \textit{effective} Bertini theorem replacing smoothness with geometric irreducibility for planar slices: given a geometrically irreducible hypersurface $X \subset \P^n$ for $n \geq 3$, he gave an upper bound on number of planes $P \subset \P^n$ for which $X \cap P$ is not geometrically irreducible in terms of the degree of $X$. Over a finite field $\F_q$, this is essentially an effective version of the Hilbert Irreducibility Theorem. Cafure and Matera \cite{CafureMatera} modified this approach further to bound how often such a planar slice contains irreducible components of small degree. We give a modest improvement on these results and extend to reducible hypersurfaces (see Theorem \ref{thm:effBertini}). We also give a sharper result if one is interested only in the existence of a single such plane.

\begin{theoremAlph}[effective Bertini irreducibility for planes, existence form (see Theorem \ref{thm:effBertini_existence})]
\label{thm:effBertini_intro}
	Let $d \geq 2$, $n \geq 3$, and \qbound{2d^2}. Suppose $f \in \F_q[x_0,\ldots,x_n]$ is a degree $d$ form, irreducible over $\overline{\F_q}$, and $X_f \colon f = 0$ is the associated geometrically integral hypersurface in $\P^n$ defined over $\F_q$.
	\begin{enumerate}[label = (\roman*)]
		\item If \qbound{{\frac{d}{8}\left(3d^3 - 2d^2 - 3d + 2\right)}}, there exists a plane $P \subset \P^n$ defined over $\F_q$ such that $X_f \cap P$ is a geometrically integral plane curve of degree $d$, defined over $\F_q$.
		\item Fix an integer $0 < D < d$. If  
		\[q > \frac{dD}{8}\left(-D^3 + 4dD^2 - 6D^2 + 12dD - 11D + 8d - 6\right),\] 
		there exists a plane $P \subset \P^n$ defined over $\F_q$ such that $X_f \cap P$ is a plane curve of degree $d$, defined over $\F_q$, with no irreducible components of degree at most $D$ defined over $\overline{\F_q}$.
	\end{enumerate}
\end{theoremAlph}

A key technical tool is a more flexible version of \cite[Theorem 5]{Kaltofen} and \cite[Theorem 3.3]{CafureMatera}, which we briefly describe geometrically. Given a geometrically integral hypersurface $X_f$ of degree $d$ and a line $L$ meeting $X_f$ transversely in at least one marked geometric point, among planes $P$ containing $L$, $X_f \cap P$ is generically a geometrically integral plane curve of degree $d$. In Theorem \ref{thm:effBertini_modified}, we parametrize the planes $P$ and describe polynomial conditions which must be satisfied if $X_f \cap P$ has components of small degree containing the marked point. We use Theorem \ref{thm:effBertini_modified} not only as an intermediate step in the proof of Theorem \ref{thm:effBertini_intro}, but also find an application to Artin's conjecture for $p$-adic forms of degrees 7 and 11.

\subsection{On Artin's conjecture}

Let $\mathbb{Q}_p$ denote the field of $p$-adic numbers and let $K/\mathbb{Q}_p$ be a finite extension with residue field $\mathbb{F}_q$. Artin conjectured that $K$ is $C_2$; that is, for all positive integers $d$ and all $n \geq d^2$, every homogeneous polynomial $F \in K[x_0, \ldots, x_n]$ of degree $d$ has a nontrivial zero in $K$. This conjecture was proven false by Terjanian, who found a counterexample with $d=4$ and $n=17$ over $K = \mathbb{Q}_2$ \cite{Terjanian1966}; infinitely many counterexamples have since been shown to exist \cite{LewisMontgomery}.

However, weaker versions of the conjecture are known to hold. For a fixed degree $d$, we say that $K$ is $C_2(d)$ if for all $n \geq d^2$, every degree $d$ form $F \in K[x_0, \ldots, x_n]$ has a nontrivial zero in $K$. All $p$-adic fields $K$ are $C_2(2)$ and $C_2(3)$, due to Hasse and Lewis, respectively \cite{Lewis1952}. Using techniques from logic, Ax and Kochen showed that $p$-adic fields are \textit{asymptotically} $C_2(d)$, in the sense that for a fixed degree $d$, $\Q_p$ is $C_2(d)$ for all but finitely many primes $p$ \cite{AxKochen1965}. Moreover, a finite extension $K/\Q_p$ is $C_2(d)$ whenever $p \gg_{d, [K:\Q_p]} 0$. In fact, all of the known counterexamples to Artin's conjecture involve composite degrees $d$, divisible by $q-1$, leaving open the following conjecture.

\begin{conjecture*}[Artin's conjecture for prime degree $p$-adic forms]
\label{conj:artin}
    Let $K/\mathbb{Q}_p$ be a finite extension and $d > 3$ prime. For all $n \geq d^2$, every degree $d$ form $F \in K[x_0, \ldots x_n]$ has a nontrivial zero in $K$, i.e.\ $K$ is $C_2(d)$.
\end{conjecture*}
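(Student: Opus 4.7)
The plan is to extend the approach of this paper from $d = 7$ to general prime degrees $d > 3$ by combining an effective Bertini theorem with Hensel lifting and induction on dimension. Given $F \in \mathcal{O}_K[x_0, \ldots, x_n]$ of degree $d$ with $n \geq d^2$, I would first reduce modulo the maximal ideal of $\mathcal{O}_K$ to obtain $\bar F \in \F_q[x_0, \ldots, x_n]$. The aim is to produce a smooth $\F_q$-point on the hypersurface $X_{\bar F}$, which then lifts to a $K$-point by Hensel's lemma. If $\bar F$ has linear factors over $\overline{\F_q}$, one uses a nondegenerate hyperplane inside such a factor; otherwise the no-linear-factors hypothesis of Theorem \ref{thm:effBertini_modified_intro} is in force, and one proceeds geometrically.

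Next, I would iteratively apply Bertini-type results to descend in dimension: choose a line $L \subset \P^n$ defined over $\F_q$ meeting $X_{\bar F}$ in a nonreduced $\overline{\F_q}$-point $x$, then invoke the appropriate corollary of Theorem \ref{thm:effBertini_modified_intro} to guarantee, for $q$ large in terms of $d$, the existence of a plane $P \supset L$ whose section $X_{\bar F} \cap P$ contains a line through $x$ on which a smooth $\F_q$-point can be found. When no such plane exists directly, one instead uses Theorem \ref{thm:CM_intro}(ii) to obtain a low-degree component and passes to it, iterating the construction. Tracking degree, dimension, and the combinatorics of the descent should yield an explicit threshold $q_0(d)$ above which the argument succeeds. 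For the finitely many primes $p$ with residue characteristic at most $q_0(d)$, one would appeal to the Ax--Kochen theorem, which asserts that $\Q_p$ is $C_2(d)$ for all but finitely many $p$, supplemented by explicit case analysis for whatever exceptional primes remain.

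The principal obstacle is that Conjecture \ref{conj:artin} is genuinely open: no proof is known for any prime $d \geq 11$, in any residue characteristic. The Bertini counting bound of Theorem \ref{thm:CM_intro} grows like $d^4$, whereas the number of $\F_q$-planes grows only polynomially in $q$, so even with the sharpening in Theorem \ref{thm:effBertini_modified_intro} the threshold $q_0(d)$ rises rapidly with $d$. More seriously, the inductive descent lives in ambient dimension $n = d^2$ --- already $49$ when $d = 7$ and $121$ when $d = 11$ --- and the number of ways that $\bar F$ can fail to yield a liftable smooth point grows combinatorially with the degree. My expectation is that the geometric techniques of this paper can, with substantial effort, be pushed to settle the conjecture for each \emph{individual} prime $d$ whenever the residue field is sufficiently large, and then combined with an effective Ax--Kochen input to cover the remaining primes. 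A proof uniform in both $d$ and the residue characteristic, however, appears to require ideas beyond those developed here, since controlling the Bertini threshold $q_0(d)$ as $d \to \infty$ seems to need either a new counting argument with subpolynomial dependence on $d$ or a fundamentally different reduction step that bypasses the full plane-section descent.
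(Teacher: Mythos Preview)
The statement you are addressing is labeled a \emph{conjecture} in the paper, not a theorem; the paper offers no proof of it and you yourself acknowledge that it is ``genuinely open: no proof is known for any prime $d \geq 11$.'' So there is no paper proof to compare against, and what you have written is a research outline rather than a proof. That said, even as an outline it contains concrete errors and obstructions worth naming.

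First, you have the role of Theorem~\ref{thm:effBertini_modified_intro} backwards. You write that it guarantees ``a plane $P \supset L$ whose section $X_{\bar F} \cap P$ contains a line through $x$ on which a smooth $\F_q$-point can be found.'' In fact the result is used in the opposite direction: it bounds the number of planes whose section \emph{does} contain a line through the marked simple point, so that for $q$ large one can choose a plane whose section has \emph{no} such linear factor. The payoff is that the component of the planar curve through that point is then forced to be nonlinear and geometrically integral, so that Hasse--Weil can be applied. Second, the appeal to Ax--Kochen to handle small residue characteristic does not work as stated: Ax--Kochen is ineffective, and moreover it only asserts $C_2(d)$ for $\Q_p$ with $p$ large, not for all finite extensions $K/\Q_p$ with small residue field; the finitely many exceptions are not explicitly known even for $d=5$. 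Third, and most seriously, the paper itself explains why the $p$-adic minimization strategy of Birch--Lewis and Laxton--Lewis breaks down for primes $d > 11$: such $d$ can be written as a sum of composite numbers, which allows $\bar F$ to restrict to a curve whose nonlinear components are all totally nonreduced, blocking the entire descent. Your iterative Bertini scheme does not address this failure mode, so it cannot succeed for $d \geq 13$ without a genuinely new idea.
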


In degrees $d=7$ and $d=11$, we improve the best known effective lower bound on $q$ for which Artin's conjecture holds.

\begin{theoremAlph}\label{thm:deg711}
    Let $K/\Q_p$ be a $p$-adic field with residue field $\F_q$. 
    \begin{enumerate}[label = (\roman*)]
        \item If \qbound{679} then every degree $7$ form $F \in K[x_0, \ldots x_n]$ has a nontrivial zero in $K$, i.e.\ $K$ is $C_2(7)$.
        \item If \qbound{7393} then every degree $11$ form $F \in K[x_0, \ldots x_n]$ has a nontrivial zero in $K$, i.e.\ $K$ is $C_2(11)$.
    \end{enumerate}
\end{theoremAlph}

To establish Theorem \ref{thm:deg711}, we follow a $p$-adic minimization approach due to Birch and Lewis \cite{BirchLewis1959} and later refined by Laxton and Lewis \cite{LaxtonLewis1965} to give effective bounds on $q$ for which Artin's conjecture holds in degrees $d = 5, 7, 11$. They proposed to reduce to the case of so-called \textit{reduced} forms $F$, then use an effective version of the Lang--Weil bound for the number of $\F_q$-points on a variety to produce a nonsingular zero of $\overline{F}$, the image of $F$ under the residue map, when $q$ is sufficiently large. This then lifts to a $K$-zero of $F$ by Hensel's lemma. We recall the definition of reduced forms and basic facts about them in \S \ref{sec:reduced_forms}.

Refinements of this general strategy have shown that $K$ is $C_2(5)$ whenever \qbound{5} due to \cite{LeepYeomans1996_quintic_forms, HeathBrown2010_p-adic_zeros, Dumke2017} and to forthcoming work of the authors, and that $K$ is $C_2(7)$ (resp.\ $C_2(11)$) whenever \qbound{883} (resp.\ \qbound{8053}) due to \cite{Knapp2001, Wooley2008_7ic_11ic}. This strategy breaks down for prime degrees $d > 11$, since such $d$ can be written as the sum of composite numbers, which allows for the hypersurface given by $\overline{F}=0$ to have only totally nonreduced nonlinear components.

In degree 5, Leep and Yeomans \cite{LeepYeomans1996_quintic_forms} refined the $p$-adic minimization techniques of Birch and Lewis by exploiting the structure of \textit{singular} plane quintic curves. They argue that if $\overline{F}$ has no nonsingular zeros (which would then be liftable to $K$ by Hensel's lemma), then there exists a plane on which $\overline{F}$ restricts to a quintic curve with at least 3 singularities. The presence of these singularities reduces the $q$ threshold for which explicit point counting methods guarantee a liftable point. 

In contrast, in degree 7 (resp.\ 11), Wooley's bound \cite[Theorem 1]{Wooley2008_7ic_11ic} is obtained using the effective Bertini theorems \cite[Corollaries 3.2, 3.4]{CafureMatera} to find plane curves of degree 7 (resp.\ 11) with a geometrically integral component, then applying the Hasse--Weil bounds. In particular, Wooley does not exploit any singularities in the relevant degree 7 (resp.\ 11) curves, citing the absence of rigidity on the anatomy of a polynomial imposed by the relatively low degree of a quintic form.

Inspired by the work of Leep and Yeomans in the quintic case, we further exploit the structure of reduced forms of degrees 7 and 11 to either find nonsingular zeros of $\overline{F}$, or to show that $\overline{F}$ has two (necessarily singular) $\F_q$-zeros spanning a line $L$ meeting $\overline{X_F}$ transversely at a point. The latter is amenable to an application of our modified effective Bertini result, Theorem \ref{thm:effBertini_modified}, producing a plane curve with at least two singular $\F_q$-points and a geometrically irreducible component; these extra singularities lower the $q$ threshold for which we can find nonsingular points on the curve, and hence zeros of $F$ in $K$.

\subsection{Notation and organization} 

We reserve the following to have consistent meaning throughout:
\begin{itemize}
    \item $p$ denotes a prime;
    \item $q$ denotes a power of $p$;
    \item $K$ denotes $p$-adic field with residue field $\F_q$;
    \item $\overline{k}$ denotes the algebraic closure of a field $k$;  
    \item $F$ denotes a homogeneous form over $K$.
\end{itemize}
We will often write $f = \overline{F}$ for the image of $F$ under the residue map, to avoid confusion with the algebraic closure.

This paper is organized as follows. In Section \ref{sec:eff_bertini} we discuss Bertini theorems for irreducibility, stating two versions in Theorems \ref{thm:effBertini} and \ref{thm:effBertini_existence}, the latter of which specializes to Theorem \ref{thm:effBertini_intro}. In \S \ref{sec:factorization_alg} we describe a factorization algorithm for bivarate polynomials, which we employ in \S \ref{sec:genericity} to prove Theorem \ref{thm:effBertini_modified}. In Section \ref{sec:facts_degreed_Fq} we record some facts about degree 7 and 11 forms over $\F_q$, while in Section \ref{sec:reduced_forms_and_consequences} we recall the definition of reduced forms and prove some consequences in the degree 7 and 11 cases. Finally, we prove Theorem \ref{thm:deg711} in Section \ref{sec:proof of thmC}.

\subsection*{Acknowledgements}
This work began during a Research in Groups workshop at the International Centre for Mathematical Sciences in Edinburgh in June 2024. Further work was undertaken during Research in Pairs meetings at Banff International Research Station in December 2024 and the Centre International de Rencontres Mathématiques in Luminy in June 2025. The authors gratefully acknowledge the support of ICMS, BIRS, and CIRM for fostering a stimulating research environment, instrumental to the development of this work. The authors would also like to thank Tim Browning, Roger Heath-Brown, David Leep, Bjorn Poonen, and Trevor Wooley for thoughtful comments on an earlier draft.

LB is grateful for the support of the U.S. National Science Foundation (DMS-2418835) and the Simons Foundation (MPS-TSM–00007992). CK was supported by the Additional Funding Programme for Mathematical Sciences, delivered by EPSRC (EP/V521917/1) and the Heilbronn Institute for Mathematical Research, and an AMS-Simons Travel Grant.

\section{Effective Bertini theorems}
\label{sec:eff_bertini}

Let $k$ be an arbitrary field, $n \geq 3$, and suppose $f \in k[x_0,\ldots,x_n]$ is a degree $d$ form defined over $k$, irreducible over $\overline{k}$. Recall that generically, the intersection of the projective hypersurface $X_f \colon f = 0$ with a hyperplane remains geometrically irreducible; this is Bertini's theorem for irreducibility. However, when $k = \F_q$, this does not suffice to guarantee the existence of even one such hyperplane. 

By an \textit{effective} Bertini theorem, we refer to a result which bounds how often the non-generic behavior occurs. When $k = \F_q$, the approach can be adapted to give a lower bound on $q$ for which the generic behavior is guaranteed to occur at least once. We will be particularly interested in intersecting $X_f$ with a \textit{plane}, rather than a hyperplane, and find it convenient to work in affine coordinates.

Dehomogenizing by setting $x_0 = 1$, consider the family of bivariate polynomials given by
\[f_{\bv, \bw, \bz} = f(1, X + \v_1, \w_2X + \z_2 Y + \v_2, \ldots, \v_1, \w_n X + \z_n Y + \v_n) \in k[X,Y]\]
for
\[(\bv, \bw, \bz) = (\v_1, \ldots, \v_n, \w_2, \ldots, \w_n, \z_2, \ldots, \z_n) \in k^{3n-2}.\]
One may view $f_{\bv, \bw, \bz}$ as cutting out an affine plane curve, obtained by slicing the $x_0\neq 0$ affine patch of $X_f$ by the plane determined by $(\bv, \bw, \bz)$. If the original $f(x_0, \ldots, x_n)$ is absolutely irreducible, then generically so is $\fvwz$.

More generally, if $f$ factors as $f = \prod_{1 \leq i \leq t} f_i^{e_i}$ for $f_i$ distinct and irreducible and $e_i \geq 1$, we say $f$ has \textit{factorization type} $(d_1^{e_i},\ldots,d_t^{e_t})$. If $x_0 \nmid f$, then generically $\fvwz$ shares the same factorization type as $f$. For brevity, if $e_i = 1$, we omit it from the type notation. Note that the factorization type is not invariant under base extension; for the most part we will be interested in the factorization type over $\overline{k}$. 

For $f$ irreducible over $\overline{k}$, 
Kaltofen used an algorithm for testing irreducibility of a bivariate polynomial to deduce polynomial conditions on $(\bv,\bw,\bz)$ whose vanishing is implied by reducibility of $\fvwz$ \cite{Kaltofen}. Cafure and Matera modified this approach to detect how often $\fvwz$ has factors of small degree \cite{CafureMatera}. We offer an extension of their results in Theorem \ref{thm:effBertini} below, which specializes to a mild strengthening of \cite[Corollaries 3.2, 3.4]{CafureMatera} in the $t=1$ case.

\begin{theorem}[effective Bertini irreducibility for planes]
\label{thm:effBertini}
	Let $d \geq 2$ and $n \geq 3$. Suppose $f \in \F_q[x_0,\ldots,x_n]$ is a degree $d$ form with factorization type $(d_1^{e_1},\ldots,d_t^{e_t})$ over $\overline{\F_q}$ such that $x_0 \nmid f$.
	\begin{enumerate}[label = (\roman*)]
		\item For all but
		\[\left(2\left(\sum_{1 \leq i \leq t} d_i \right)^2 + \sum_{1 \leq i \leq t} \frac{d_i}{8}\left(3d_i^3 - 2d_i^2 - 3d_i + 2\right) \right)q^{3n-3}\]
		tuples $(\bv,\bw,\bz) \in \F_q^{3n-2}$, $\fvwz(X,Y)$ has factorization type $(d_1^{e_1},\ldots,d_t^{e_t})$ over $\overline{\F_q}$.
		
		\item Fix $0 \leq D_i < d_i$ for $1 \leq i \leq t$. For all but
		\[\left(2\left(\sum_{1 \leq i \leq t} d_i \right)^2 + \sum_{1 \leq i \leq t} \frac{d_iD_i}{8}\left(-D_i^3 + 4d_iD_i^2 - 6D_i^2 + 12d_iD_i - 11D_i + 8d_i - 6\right) \right)q^{3n-3}\]
		tuples $(\bv,\bw,\bz) \in \F_q^{3n-2}$, $(f_i)_{\bv,\bw,\bz}(X,Y)$ has no nonconstant factor of degree at most $D_i$ over $\overline{\F_q}$ for all $i$.
	\end{enumerate}
\end{theorem}

A consequence of Theorem \ref{thm:effBertini} is an effective bound on $q$ for which the existence of at least one $(\bv,\bw,\bz)$ for which $\fvwz$ has the same factorization type as $f$ is guaranteed (and similarly for $f$ lacking factors of small degree). We prove a stronger existence result  directly, under an additional hypothesis on $q$.

\begin{theorem}[effective Bertini irreducibility for planes, existence form]
\label{thm:effBertini_existence}
	Let $d \geq 2$, $n \geq 3$, and \qbound{2d^2}. Suppose $f \in \F_q[x_0,\ldots,x_n]$ is a degree $d$ form with factorization type $(d_1^{e_1},\ldots,d_t^{e_t})$ over $\overline{\F_q}$ such that $x_0 \nmid f$.
	
	\begin{enumerate}[label = (\roman*)]
		\item If 
		\[q > \sum_{1 \leq i \leq t} \frac{d_i}{8}\left(3d_i^3 - 2d_i^2 - 3d_i + 2\right),\]
		there exists $(\bv,\bw,\bz) \in \F_q^{3n-2}$ such that $\fvwz$ has factorization type $(d_1^{e_1},\ldots,d_t^{e_t})$ over $\overline{\F_q}$.
		
		\item Fix $0 \leq D_i < d_i$ for $1 \leq i \leq t$. If
        \[q > \sum_{1 \leq i \leq t} \frac{d_iD_i}{8}\left(-D_i^3 + 4d_iD_i^2 - 6D_i^2 + 12d_iD_i - 11D_i + 8d_i - 6\right),\]
        there exists $(\bv,\bw,\bz) \in \F_q^{3n-2}$ such that $(f_i)_{\bv,\bw,\bz}$ has no nonconstant factor of degree at most $D_i$ over $\overline{\F_q}$ for all $i$.
	\end{enumerate}
\end{theorem}

Before proving Theorems \ref{thm:effBertini} and \ref{thm:effBertini_existence}, we establish some intermediates. In \S \ref{sec:factorization_alg} we adapt Kaltofen's bivariate factorization algorithm, relaxing what amounts to a requirement that $\fvwz(X,0)$ is squarefree. In \S \ref{sec:genericity}, we use this to give a polynomial on $\bz$ which vanishes when $\fvwz(X,Y)$ has small factors through a fixed (geometric) point. The proofs of Theorems \ref{thm:effBertini} and \ref{thm:effBertini_existence} are then given in \S \ref{subsec:proofs_effBertini}. 

We conclude with some examples illustrating Theorems \ref{thm:effBertini} and \ref{thm:effBertini_existence} in the degree 7 case.

\begin{example}
\label{ex:deg7_irr}
    Let $d=7$, $n \geq 3$, and assume $f(x_0, \ldots, x_n)$ is irreducible over $\overline{\F_q}$. By Theorem \ref{thm:effBertini}(i), $\fvwz(X,Y)$ is irreducible over $\overline{\F_q}$ for all but $896q^{3n-3}$ choices of $(\bv,\bw,\bz) \in \F_q^{3n-2}$. 
    
    If we merely need existence of at least one such $(\bv,\bw,\bz)$, we see that \qbound{896} suffices. However, we can do slightly better using Theorem \ref{thm:effBertini_existence}(i), which states if \qbound{798}, there exists at least one $(\bv,\bw,\bz) \in \F_q^{3n-2}$ such that $\fvwz$ is irreducible over $\overline{\F_q}$.
\end{example}

\begin{example}
\label{ex:deg7_nolinear}
	Let $d=7$, $n \geq 3$, and assume $f(x_0, \ldots, x_n)$ is irreducible over $\overline{\F_q}$, as in Example \ref{ex:deg7_irr}. Suppose instead that we only require that $\fvwz(X,Y)$ does not have a \textit{linear} factor over $\overline{\F_q}$. Then applying Theorem \ref{thm:effBertini}(ii) with $D=1$, for all but $224q^{3n-3}$ choices of $(\bv,\bw,\bz) \in \F_q^{3n-2}$, we have $\fvwz$ has no linear factors. Using Theorem \ref{thm:effBertini_existence}(ii), if \qbound{126} then there exists at least one such $(\bv,\bw,\bz)$.
\end{example}

\begin{example}
\label{ex:deg7_reducible}
	Let $d=7$, $n \geq 3$, and suppose $f(x_0,\ldots,x_n)$ has factorization type $(d_1^{e_1},\ldots,d_t^{e_t})$ over $\overline{\F_q}$. Let $N_{(d_i^{e_i})}$ count the number of $\fvwz$ with the same type as $f$,
	\[N_{(d_i^{e_i})} = \#\left\{(\bv,\bw,\bz) \in \F_q^{3n-2} : \fvwz \text{ has type } (d_1^{e_1},\ldots,d_t^{e_t}) \right\}.\]
    We can use Theorem \ref{thm:effBertini}(i) to give an upper bound for $\# \F_q^{3n-2} - N_{(d_i^{e_i})}$, and Theorem \ref{thm:effBertini_existence}(i) to give a lower bound on $q$ for which at least one $\fvwz$ exists with type $(d_1^{e_1},\ldots,d_t^{e_t})$. This is illustrated in Table \ref{tab:deg7_bertini_examples} for types not containing a linear factor defined over $\overline{\F_q}$.
	
\begin{table}[ht]
\caption{Illustrating Theorems \ref{thm:effBertini}(i) and \ref{thm:effBertini_existence}(i) in degree 7}
\label{tab:deg7_bertini_examples}
\begin{tabular}{|c|c|c|}
	\hline \makecell{Factorization type of $f$\\$(d_1^{e_1},\ldots,d_t^{e_t})$} & \makecell{$\#\F_q^{3n-2} - N_{(d_i^{e_i})}$\\(Theorem \ref{thm:effBertini}(i))} & \makecell{$\exists \fvwz$ with type $(d_i^{e_i})$\\(Theorem \ref{thm:effBertini_existence}(i))} \\ \hline
	(7) & 		$\leq 896q^{3n-3}$ & \qbound{798} \\
	(5,2) & 	$\leq 296q^{3n-3}$ & \qbound{198} \\
	(4,3) & 	$\leq 194q^{3n-3}$ & \qbound{98} \\
	(3,2,2) & 	$\leq 125q^{3n-3}$ & \qbound{98} \\
	(3,$2^2$) & $\leq 122q^{3n-3}$& \qbound{98} \\ \hline 
\end{tabular}
\end{table}
\end{example}

\subsection{A factorization algorithm}
\label{sec:factorization_alg}

In this subsection only, suppose $f(X,Y) \in k[X,Y]$ is monic in $X$ and has total degree $d$ (not necessarily homogeneous). Fix a positive integer $D < d$. The factorization algorithms found in \cite{Kaltofen} and \cite[\S 3.1]{CafureMatera} underlying the proofs of effective Bertini theorems \cite[Corollaries 3.2, 3.3]{CafureMatera} request that $f(X,0)$ is squarefree. With slight modifications however, they still yield useful information when $f(X,0)$ is \textit{not squarefull}, i.e.\ when considered over $\overline{k}$, $f(X,0)$ has at least one simple root. We recall the setup for the algorithm; see also \cite[\S 1]{Kaltofen}, \cite[\S 3.1]{CafureMatera}.

Let $\alpha_0 \in \overline{k}$ be a root of $f(X,0)$ such that $\frac{\partial f}{\partial x}(\alpha_0, 0) \neq 0$. Viewing $f$ as a univariate polynomial in $X$ over $k(\alpha_0)[[Y]]$, since the partial derivative at $\alpha_0$ is a unit, we can lift $\alpha_0$ to $\widetilde{\alpha} \in k(\alpha_0)[[Y]]$ satisfying $f(\widetilde{\alpha}, Y) = 0$. Set $\lmax = dD$, our desired order of approximation. Truncating, we produce $\alpha \in k(\alpha_0)[Y]$ of $Y$-degree at most $\lmax$ satisfying
\begin{align*}
	f(\alpha, Y) &\equiv 0 \pmod{Y^{\lmax + 1}}.
\end{align*}
This may be accomplished explicitly via Newton's method, as in \cite{Kaltofen, CafureMatera}. For $\mu = 1, \ldots, D$ we write $\alpha^\mu$ as
\[\alpha^\mu \equiv \sum_{r=0}^{\lmax} a_{\mu, r} Y^r \pmod{Y^{\lmax+1}},\]
where the $a_{\mu, r} \in k(\alpha_0)$. 

We now look for a polynomial $g \in\overline{k}[X,Y]$ of degree $m \leq D$ for which $g \mid f$ and $g(\alpha_0,0) = 0$. Here we will use $\ell = dm$ as our order of approximation. For each such $m$ we examine whether
\begin{equation}\label{eq:powers_alpha}
	\alpha^m + \sum_{\mu=0}^{m-1} h_\mu(Y)\alpha^\mu \equiv 0 \pmod{Y^{\ell + 1}}
\end{equation}
has a solution for $h_\mu(Y) \in \overline{k}[Y]$ with $\deg h_\mu \leq m-\mu$. Writing
\[h_\mu(Y) = \sum_{\eta=0}^{m-\eta} u_{\mu,\eta}Y^\eta\]
for $u_{\mu,\eta} \in \overline{k}$, we have that \eqref{eq:powers_alpha} holds if and only if the linear system
\begin{equation}\label{eq:linear system}
	a_{m,r} + \sum_{\mu=0}^{m-1} \sum_{\eta=0}^{m-\mu} a_{\mu, r-\eta}u_{\mu,\eta} = 0 \text{ for } 0 \leq r \leq \ell
\end{equation}
has a solution in $\overline{k}$. Note that here we take the convention that $a_{\mu, r-\eta} = 0$ when $\eta > r$.

\begin{example}[$D=1$]
	As an explicit example, we describe \eqref{eq:powers_alpha} and \eqref{eq:linear system} in the special case of $D=1$. Here we take $m=1$, so we are looking to solve
	\[\alpha + h_0(Y) \equiv 0 \pmod{Y^{d+1}},\]
	for $h_0(Y) = u_{0,1}Y + u_{0,0} \in k(\alpha_0)[Y]$. We have
    \[\alpha + h_0(Y) = a_{1,0} + u_{0,0} + (a_{1,1} + u_{0,1})Y + \sum_{r=2}^{d} a_{1,r}Y^r,\]
    so a solution to \eqref{eq:powers_alpha} must have
	\begin{align*}
		a_{1,0} + u_{0,0} &= 0 \\
		a_{1,1}Y + u_{0,1}Y &= 0 \\
		a_{1,2}Y^2 &= 0 \\
		&\vdots \\
		a_{1,d}Y^{d} &= 0.
	\end{align*}
	Rearranging \eqref{eq:linear system} into a matrix equation over $\overline{k}$, this looks like
	\[\begin{pmatrix} 
		1 & 0 \\
		0 & 1 \\
		0 & 0 \\
		\vdots & \vdots \\
		0 & 0 
	\end{pmatrix}
	\begin{pmatrix}
		u_{0,0} \\ u_{0,1}
	\end{pmatrix} = 
	\begin{pmatrix}
		-a_{1,0} \\ -a_{1,1} \\ -a_{1,2} \\ \vdots \\ -a_{1,d}
	\end{pmatrix}.
	\]
	This system is insoluble for $d > 1$, unless $a_{1,r} = 0$ for $2 \leq r \leq d$, i.e.\ $\alpha \equiv a_{1,0} + a_{1,1}Y \pmod{Y^{d+1}}$ all along.
\end{example}

A solution to \eqref{eq:linear system} for some $m \leq D$ implies that $f$ has a nonconstant factor of degree at most $D$. We establish this below, along with a partial converse. This is closely related to \cite[Lemma 3.1]{CafureMatera}, which puts stronger hypotheses on $f$ and uses a different order of approximation.\footnote{In \cite[\S 3.1]{CafureMatera} $\lmax = 2dD$ and $\ell = 2dm$ were used.}

\begin{proposition}
\label{prop:linear_systems}
    Suppose $f(X,Y) \in k[X,Y]$ is a (not necessarily homogeneous) polynomial of total degree $d$ such that $f(X,0)$ is degree $d$ in $k[X]$ with a simple root $\alpha_0 \in \overline{k}$. Fix $D < d$.
    
    If the linear system \eqref{eq:linear system} has a solution in $k(\alpha_0)$ for $m \leq D$, then $f$ factors as $f = g(X,Y)h(X,Y)$ for nonconstant $g,h \in \overline{k}[X,Y]$ satisfying one of the following:
    \begin{enumerate}[label = (\alph*)]
    	\item $\deg g \leq D$ and $g(\alpha_0,0) = 0$;
    	\item $\deg g \leq D-1$.
    \end{enumerate}
    Conversely, if (a) above is satisfied, then the linear system \eqref{eq:linear system} has a solution in $k(\alpha_0)$ for some $m \leq D$.
\end{proposition}

\begin{proof}
    If \eqref{eq:linear system} has a solution in $k(\alpha_0)$ for $m \leq D$, then we have $\ell = dm$ and there exists $g(X,Y) \in k(\alpha_0)[X,Y]$ of total degree at most $m$, such that $g(\alpha,Y) \equiv 0 \pmod{Y^{\ell+1}}$. Consider $R(Y) = \mathrm{Res}_X(f,g) \in k(\alpha_0)[Y]$, which is a polynomial in $Y$ of degree at most $\ell$. Since $f(\alpha,Y) \equiv 0 \pmod{Y^{\ell+1}}$ as well, we have $R(Y) \equiv 0 \pmod{Y^{\ell+1}}$. This forces $R=0$, so $f$ and $g$ have a nontrivial common factor in $k(\alpha_0)[X,Y]$. Either $g\mid f$ and (a) is satisfied, or this common factor has degree strictly less than that of $g$, and hence less than $D$, so (b) is satisfied.
    
    For the partial converse, suppose $f = gh$ over $\overline{k}$ satisfies (a). Since $\alpha_0$ is a simple root of $f(X,0)$, we have $g$ must be defined over $k(\alpha_0)$; otherwise the conjugates of $g$ in $\overline{k}/k(\alpha_0)$ would also vanish at $(\alpha_0,0)$. Moreover, we have $h(\alpha_0,0) \neq 0$, so $h(\alpha,Y) \not\equiv 0 \pmod{Y^{\lmax+1}}$. Since $f(\alpha,Y) \equiv 0 \pmod{Y^{\lmax+1}}$, we have $g(\alpha,Y) \equiv 0 \pmod{Y^{\lmax+1}}$. The coefficients of $g$ thus produce a solution to \eqref{eq:linear system} in $k(\alpha_0)$ for $m = \deg g$.
\end{proof}

When $D=1$, Proposition \ref{prop:linear_systems} specializes to the statement that \eqref{eq:linear system} has a solution in $k(\alpha_0)$ for $m=1$ if and only if $f$ has a linear factor vanishing on $(\alpha_0,0)$. We can also extract an equivalence in the case $D= d-1$.

\begin{corollary}
\label{cor:linear_systems_reducibility}
    Suppose $f(X,Y) \in k[X,Y]$ is a (not necessarily homogeneous) polynomial of total degree $d$ such that $f(X,0)$ is degree $d$ in $k[X]$ with a simple root $\alpha_0 \in \overline{k}$. 

   The linear system \eqref{eq:linear system} has a solution in $k(\alpha_0)$ for some $m < d$ if and only if $f = gh$ for nonconstant $g,h \in k(\alpha_0)[X,Y]$.
\end{corollary}


\subsection{Genericity conditions}
\label{sec:genericity}

Our goal is to constrain non-generic behavior of $\fvwz$ to those $(\bv,\bw,\bz)$ that satisfy certain polynomial conditions. We reserve the lowercase $\v,\w,\z$ for elements of $k$, and use the uppercase $\vvar, \wvar, \zvar$ for indeterminates to describe these conditions. We give a flexible such result below; this is the key intermediate in proving Theorems \ref{thm:effBertini} and \ref{thm:effBertini_existence}. 

\begin{theorem}
\label{thm:effBertini_modified}
	Let $d \geq 2$ and $n \geq 3$. Suppose $f \in k[x_0,\ldots,x_n]$ is a degree $d$ form, irreducible over $\overline{k}$. Moreover, suppose there exist $(\bv, \bw) \in k^{2n-1}$ such that
	\[f(1,X + \v_1, \w_2X + \v_2, \ldots, \w_nX + \v_n) \in k[X]\]
   	has degree $d$ with a simple root $\alpha_0 \in \overline{k}$. Fix $0 \leq D < d$. 
   	
   	There exists $\Psi_D^{(\alpha_0)} \in k(\alpha_0)[\zvar_2, \ldots, \zvar_n]$ (depending on $(\bv, \bw)$) of degree
    \[\deg \Psi_D^{(\alpha_0)} \leq \frac{D}{8} \left(-D^3 + 4dD^2 - 6D^2 + 12dD - 11D + 8d - 6\right)\]
    such that whenever $\Psi_D^{(\alpha_0)}(\z_2,\ldots,\z_n) \neq 0$, $f_{\bv, \bw, \bz}(X,Y)$ has no nonconstant factor of degree at most $D$ over $\overline{k}$ vanishing on $(\alpha_0, 0)$.
    
    In particular, whenever $\Psi_{d-1}^{(\alpha_0)}(\z_2,\ldots,\z_n) \neq 0$, $\fvwz(X,Y)$ is irreducible over $\overline{k}$.
\end{theorem}

\begin{proof}
We follow similar lines as \cite[Theorem 5]{Kaltofen} and \cite[Theorem 3.3]{CafureMatera}. Let 
    \[\chi = \chi(X,Y,\zvar_2, \ldots, \zvar_n) = f(1,X +\v_1,\w_2X + \zvar_2Y +\v_2, \ldots,\w_nX + \zvar_nY +\v_n),\]
    considered in $\overline{k}[X,Y,\zvar_2,\ldots,\zvar_n]$. We have $\chi$ is irreducible by \cite[Lemma 5]{Kaltofen}.

    Now consider $\chi \in \overline{k}(\zvar_2, \ldots, \zvar_n)[X,Y]$. By Proposition \ref{prop:linear_systems}, the system \eqref{eq:linear system} has no solutions in the field $\overline{k}(\zvar_2, \ldots, \zvar_n)$ for $m\leq D$. Moreover, upon examination of the Newton iteration step \cite[Step N]{Kaltofen, CafureMatera} we observe the system \eqref{eq:linear system} has coefficients in $k(\alpha_0)[\zvar_2, \ldots, \zvar_n]$; the only denominators introduced are those involving $\frac{\partial \chi}{\partial X}(\alpha_0, 0) \in k(\alpha_0)$.

    Let $\Psi_D^{(\alpha_0)}$ be a maximal nonzero minor of the augmented matrix of \eqref{eq:linear system}. If $\bz \in k^{n-1}$ is chosen such that $\Psi_D^{(\alpha_0)}(\z_2, \ldots, \z_n) \neq 0$ then the system \eqref{eq:linear system} obtained by specializing $\zvar_i$ to $\z_i$ for $2 \leq i \leq n$ is inconsistent. Thus by Proposition \ref{prop:linear_systems}, $\chi(X,Y,\z_2,\ldots,\z_n) = f_{\bv, \bw, \bz}(X,Y)$ has no nonconstant factors of degree at most $D$ defined over $\overline{k}$ vanishing on $(\alpha_0, 0)$.

	To estimate the degree of $\Psi_{D}^{(\alpha_0)}$, the arguments of \cite[Theorem 3.3]{CafureMatera} (see also \cite[Theorem 5]{Kaltofen}) remain essentially unchanged, save for the updated value of $\lmax = dD$. We find
    \[\deg \Psi_D^{(\alpha_0)} \leq \sum_{j=0}^{(D+1)(D+2)/2-1} \lmax - j = \frac{D}{8} \left(-D^3 + 4dD^2 - 6D^2 + 12dD - 11D + 8d - 6\right),\]
    completing the proof.
\end{proof}

We will also need that $f(1,X+\v_1,\w_2X +\v_2,\ldots,\w_nX +\v_n)$ is generically squarefree of degree $d$, and in fact this can be controlled by another polynomial condition. This is \cite[Lemma 4]{Kaltofen}, modified to allow reducible $f$.

\begin{lemma}
\label{lem:upsilon}
	Let $d \geq 2$ and $n \geq 3$. Suppose $f \in k[x_0,\ldots,x_n]$ is a degree $d$ form with factorization type $(d_1,\ldots,d_t)$ over $\overline{k}$ such that $x_0 \nmid f$.
	
	There exists a nonzero polynomial $\Upsilon \in \overline{k}[\vvar_1, \ldots, \vvar_n, \wvar_2, \ldots, \wvar_n]$ with $\deg \Upsilon \leq 2d^2$ such that whenever $\Upsilon(\v_1, \ldots,\v_n,\w_2, \ldots,\w_n) \neq 0$ for $(\bv,\bw) \in \overline{k}^{2n-1}$, we have $f(1,X + \v_1, \w_2X + \v_2, \ldots, \w_nX + \v_n) \in \overline{k}[X]$ is squarefree of degree $d$.
\end{lemma}

\begin{proof}
	We essentially follow \cite[\S 3.2]{CafureMatera} and \cite[Lemmas 2,3,4]{Kaltofen}. Let 
	\[\phi = f(1, X + \vvar_1, \wvar_2X + Y_2 + \vvar_2, \ldots, \wvar_nX + Y_n + \vvar_n) \in k(\vvar_1, \ldots, \vvar_n, \wvar_2, \ldots, \wvar_n)[X,Y_2,\ldots, Y_n]\]
	and similarly define $\phi_i$ in terms of the irreducible factors $f_i$. By \cite[Lemma 2]{Kaltofen}, the $\phi_i$ are irreducible over the algebraic closure of $k(\vvar_1, \ldots, \vvar_n, \wvar_2, \ldots, \wvar_n)$. The same argument in \cite[Lemma 2]{Kaltofen} shows $\phi_i$ and $\phi_j$ share no common factors over the closure of $k(\vvar_1, \ldots, \vvar_n, \wvar_2, \ldots, \wvar_n)$, since otherwise so would the $f_i$. 
	
	Let $\lambda$ (resp.\ $\lambda_i$) denote the leading term of $\phi(X,0,\ldots,0)$ (resp.\ $\phi_i(X,0,\ldots,0)$) as a polynomial in $k(\vvar_1,\ldots,\vvar_n,\wvar_2,\ldots,\wvar_n)[X]$. We have $\lambda, \lambda_i \in k[\wvar_2,\ldots,\wvar_n]$ have total degree $d$ and $d_i$, respectively (the hypothesis $x_0 \nmid f$ ensures that $\phi(X,0,\ldots,0)$ has degree $d$ in $X$).
	
	Take $\psi = \phi/\lambda$ and $\psi_i = \phi_i/\lambda_i$, which are in $k(\vvar_1, \ldots, \vvar_n, \wvar_2, \ldots, \wvar_n)[X,Y_2,\ldots,Y_n]$, and now monic in $X$. We want to show the resultant $\Res_X\left(\psi(X,0,\ldots,0),\frac{\partial \psi(X,0,\ldots,0)}{\partial X}\right)$ is nonzero in $k(\vvar_1, \ldots, \vvar_n, \wvar_2, \ldots, \wvar_n)$, which \textit{generically} gives the desired squarefreeness. By basic properties of resultants, 
	\begin{align*}
		\Res_X&\Big(\psi(X,0,\ldots,0),\frac{\partial \psi(X,0,\ldots,0)}{\partial X}\Big)\\ 
        &= \prod_{1 \leq i \leq t} \Res_X\left(\psi_i(X,0,\ldots,0),\frac{\partial \psi_i(X,0,\ldots,0)}{\partial X}\right)\prod_{j \neq i} \Res_X\left(\psi_i(X,0,\ldots,0),\psi_j(X,0,\ldots,0)\right).
	\end{align*}
	By \cite[Lemma 3]{Kaltofen}, we have that $\Res_X\left(\psi_i(X,0,\ldots,0),\frac{\partial \psi_i(X,0,\ldots,0)}{\partial X}\right)$ is nonzero. Furthermore, following the strategy of the proof of \cite[Lemma 3]{Kaltofen}, we find that if $\Res_X\left(\psi_i(X,0,\ldots,0),\psi_j(X,0,\ldots,0)\right) \neq 0$, then there is a common nonconstant factor $g(X) \in k(\wvar_2,\ldots,\wvar_n)[X,\vvar_1,\ldots, \vvar_n]$ of $\psi_i(X,0,\ldots,0)$ and $\psi_j(X,0,\ldots,0)$. Making the substitution
	\[g\left(x_1-\vvar_1,\vvar_1,x_2 - \wvar_2(x_1-\vvar_1),\ldots,x_n - \wvar_n(x_1-\vvar_1)\right),\]
	and rehomogenizing, we obtain a nonconstant factor of both $f_i$ and $f_j$ over $k$, a contradiction. Thus we conclude the desired resultant is nonzero, from which it follows that $f(1,X + \v_1, \w_2X + \v_2, \ldots, \w_nX + \v_n)$ (and hence $\fvwz(X,0)$) is generically squarefree for $(\bv,\bw) \in \overline{k}^{2n-1}$.
	
	Take $\Upsilon$ to be the product of $\lambda$ and $\Res_X \left(\phi(X,0,\ldots,0),\frac{\partial \phi(X,0,\ldots,0)}{\partial X}\right)$. When $\Upsilon(\v_1,\ldots,\v_n,\w_2,\ldots,\w_n) \neq 0$, we have that
	\[f(1,X+\v_1,\w_2X +\v_2,\ldots,\w_nX +\v_n)\] 
	has nonzero leading term and discriminant, i.e.\ it is squarefree of degree $d$ in $\overline{k}[X]$. 
	
	It remains to compute the degree of $\Res_X \left(\phi(X,0,\ldots,0),\frac{\partial \phi(X,0,\ldots,0)}{\partial X}\right)$ in $k[\vvar_1,\ldots,\vvar_n,\wvar_2,\ldots,\wvar_n]$. As in \cite[Lemma 4]{Kaltofen}, this follows from the fact that the resultant is homogeneous of degree $2d-1$ in the coefficients of $\phi(X,0,\ldots,0)$ and $\frac{\partial \phi(X,0,\ldots,0)}{\partial X}$; these coefficients have total degree at most $d$ in $\vvar_1,\ldots,\vvar_n,\wvar_2,\ldots,\wvar_n$, making the total degree of the generic resultant at most $2d^2-d$. Adding in the degree of $\lambda$, we have $\deg \Upsilon \leq 2d^2$.
\end{proof}

\subsection{Proofs of Theorem \ref{thm:effBertini} and \ref{thm:effBertini_existence}}
\label{subsec:proofs_effBertini}

\begin{proof}[Proof of Theorem \ref{thm:effBertini}]	
	It suffices to prove the statement for $e_i = 1$ for $1 \leq i \leq t$, since $\fvwz = \prod_{1 \leq i \leq t}(f_i)^{e_i}_{\bv,\bw,\bz}$. 
	
	Let $\Upsilon \in \overline{k}[\vvar_1,\ldots,\vvar_n,\wvar_2,\ldots,\wvar_n]$ be the polynomial obtained from Lemma \ref{lem:upsilon}. If $(\bv,\bw) \in \F_q^{2n-1}$ satisfy $\Upsilon(\v_1,\ldots,\v_n,\w_2,\ldots,\w_n) \neq 0$, then
	\[f(1,X+\v_1,\w_2X +\v_2,\ldots,\w_nX +\v_n) = \prod_{1 \leq i \leq t} f_i(1,X+\v_1,\w_2X +\v_2,\ldots,\w_nX +\v_n)\]
	is squarefree of degree $d$ in $k[X]$.
	
	For $0 \leq D_i < d_i$, and any root $\alpha_{i,0}$ of $f_i(1,X+\v_1,\w_2X +\v_2,\ldots,\w_nX +\v_n)$, we apply Theorem \ref{thm:effBertini_modified} to find $\Psi_{i,D_i}^{(\alpha_{i,0})} \in \overline{k}[\zvar_2,\ldots,\zvar_n]$ so that whenever $\Psi_{i,D_i}^{(\alpha_{i,0})}(\z_2,\ldots,\z_n) \neq 0$, we have $(f_i)_{\bv,\bw,\bz}$ has no nonconstant factor of degree at most $D$ over $\overline{k}$ vanishing on $(\alpha_{i,0},0)$. 
	
	We now address parts (i) and (ii) separately.
	\begin{enumerate}[label = (\roman*)]
		\item Set $\Psi = \prod_{1 \leq i \leq t} \Psi_{i,d_i-1}^{(\alpha_{i,0})}$ which satisfies
		\[\deg \Psi = \sum_{1 \leq i \leq t} \frac{d_i}{8}\left(3d_i^3 - 2d_i^2 - 3d_i + 2\right).\] In this case, if $\Psi(\z_2,\ldots,\z_n) \neq 0$, we have $(f_i)_{\bv,\bw,\bz}$ is irreducible of degree $d_i$ over $\overline{k}$ for all $i$, hence $\fvwz$ has type $(d_1,\ldots,d_t)$. 
		
		\item Set 
		\[\Psi = \prod_{1 \leq i \leq t} \prod_{\substack{\alpha_{i,0} \in \overline{k} \\ (f_i)_{\bv,\bw,\bz}(\alpha_{i,0},0) = 0}} \Psi_{i,D_i}^{(\alpha_{i,0})},\]
        where in the innermost product, $\alpha_{i,0}$ runs over the $d_i$ roots of $f_i(1,X + \v_1, \w_2X + \v_2,\ldots,\w_n X + \v_n)$. This satisfies
		\[\deg \Psi \leq \sum_{1 \leq i \leq t} \frac{d_iD_i}{8}\left(-D_i^3 + 4d_iD_i^2 - 6D_i^2 + 12d_iD_i - 11D_i + 8d_i - 6\right).\]
		In this case, if $\Psi(\z_2,\ldots,\z_n) \neq 0$, we have each $(f_i)_{\bv,\bw,\bz}$ has no nonconstant factor of degree at most $D_i$ through any of the $d_i$ roots $\alpha_{i,0}$. Thus $(f_i)_{\bv,\bw,\bz}$ has no nonconstant factor of degree at most $D_i$.
	\end{enumerate}
Note that in either case, while $\Psi$ depends on the choice of $(\bv,\bw)$, the upper bound for its degree does not.
	
In either case, we claim that for at most $(\deg \Upsilon + \deg \Psi)q^{3n-3}$ tuples $(\bv,\bw,\bz)$ does $\fvwz$ fail to satisfy the desired property. Set
	\[N_\Upsilon = \# \left\{ (\bv,\bw) \in \F_q^{2n-1} : \Upsilon(\v_1,\ldots,\v_n,\w_2,\ldots,\w_n) = 0 \right\}\]
	which is bounded above by $(\deg \Upsilon)q^{2n-2}$ (see e.g.\ \cite[Lemma 2.1]{CafureMatera}).
	Then the number of $(\bv,\bw,\bz) \in \F_q^{3n-2}$ for which $\fvwz$ fails to have the desired property is bounded above by
	\begin{align*} 
		N_\Upsilon q^{n-1} + (q^{2n-1} - N_\Upsilon) \deg \Psi q^{n-2}
		\leq \left(\deg \Upsilon + \deg \Psi\right)q^{3n-3}.
	\end{align*}
	Substituting $\deg \Upsilon \leq 2d^2$ and the appropriate degree bound for $\Psi$ yields (i) and (ii).
\end{proof}

\begin{proof}[Proof of Theorem \ref{thm:effBertini_existence}]
	When \qbound{2d^2}, there exists $(\bv,\bw) \in \F_q^{2n-1}$ such that $\Upsilon(\v_1,\ldots,\v_n,\w_2,\ldots,\w_n) \neq 0$. As in the proof of Theorem \ref{thm:effBertini}, we define $\Psi \in \overline{k}[\zvar_2,\ldots,\zvar_n]$ appropriately for (i) and (ii) using Theorem \ref{thm:effBertini_modified}. 
	
	If $q > \deg \Psi$, there exists $(\bv,\bw,\bz)$ such that $\fvwz$ has the same type as $f$ (resp.\ $(f_i)_{\bv,\bw,\bz}$ has no nonconstant factor of degree at most $D_i$). Since we only need the one $\fvwz$, the degree of $\Upsilon$ does not contribute to this estimate, and the degree estimates for $\Psi$ alone yield (i) and (ii).
\end{proof}

\section{Degree \texorpdfstring{$d$}{d} forms over \texorpdfstring{$\F_q$}{Fq}}
\label{sec:facts_degreed_Fq}

In this section, we record some generalities about degree $d$ forms $f(x_0,\ldots,x_n)$  and plane curves defined over $\F_q$ before specializing to degrees $d=7$ and $d=11$ in \S \ref{subsec:d=7} and \S \ref{subsec:d=11}, respectively.

\begin{lemma}
\label{lem:all_pts_vanish}
	Suppose \qboundgeq{d} and $f$ is a nonzero degree $d$ form in $n+1 \geq 2$ variables. Then $X_f(\F_q) \subsetneq \P^n(\F_q)$.
\end{lemma}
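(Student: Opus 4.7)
The plan is to proceed by induction on $m$, using the base case $m = 1$ to handle the general inductive step via a well-chosen $\F_q$-line.

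For the base case $m = 1$, $f$ is a binary form of degree $d$ which factors over $\overline{\F_q}$ into at most $d$ linear factors, and hence cuts out at most $d$ points on $\P^1$. Since $|\P^1(\F_q)| = q+1 \geq d+1$, at least one $\F_q$-point is not a zero of $f$, as desired.

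For the inductive step $m \geq 2$, I would expand $f$ as a polynomial in $x_m$,
\[f = \sum_{i=0}^d f_i(x_0, \ldots, x_{m-1}) \, x_m^i,\]
where each $f_i \in \F_q[x_0, \ldots, x_{m-1}]$ is either zero or homogeneous of degree $d - i$. Let $e$ be the smallest index with $f_e \neq 0$; since $q \geq d \geq d - e$, the inductive hypothesis applied to $f_e$ (in $m$ variables) produces a point $P = [a_0 : \cdots : a_{m-1}] \in \P^{m-1}(\F_q)$ with $f_e(P) \neq 0$. I would then restrict $f$ to the $\F_q$-line $L$ through $[a_0 : \cdots : a_{m-1} : 0]$ and $[0 : \cdots : 0 : 1]$, parametrized by $[s : t] \mapsto [s a_0 : \cdots : s a_{m-1} : t]$. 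This restriction
\[f|_L(s, t) = \sum_{i=0}^d f_i(P) \, s^{d-i} t^i\]
is a nonzero binary form of degree $d$, since its $s^{d-e} t^e$ coefficient is $f_e(P) \neq 0$. Applying the base case to $f|_L$ then furnishes an $\F_q$-point of $L \subseteq \P^m$ not lying on $X_f$.

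The only subtlety is ensuring that the chosen line $L$ is not contained in $X_f$, which is why I single out the smallest $e$ with $f_e \neq 0$: this guarantees $f|_L$ is a nonzero binary form to which the base case applies. Everything else is bookkeeping, and the hypothesis $q \geq d$ is used precisely (and only) at the base case, where it supplies the count $|\P^1(\F_q)| > d$.
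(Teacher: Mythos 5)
Your proof is correct. Both arguments share the same base case idea (a nonzero binary form of degree $d \leq q$ cannot vanish on all $q+1 \geq d+1$ points of $\P^1(\F_q)$, which you justify by counting roots and the paper justifies via divisibility by $x_0^q x_1 - x_0 x_1^q$), but your inductive step takes a genuinely different route. The paper argues by contradiction: if $f$ vanished on all of $\P^m(\F_q)$, then restricting to each $\F_q$-rational hyperplane and invoking the inductive hypothesis shows every $\F_q$-rational linear form divides $f$, and since there are more than $d$ of these (up to scaling) this contradicts $\deg f = d$. You instead argue directly: writing $f = \sum_i f_i x_m^i$ and picking $P$ with $f_e(P) \neq 0$ via the inductive hypothesis (note this requires the induction to be over all degrees at most $q$, since $\deg f_e = d - e$ may be smaller than $d$, but that is harmless), you exhibit an explicit line $L$ on which $f$ restricts to a nonzero binary form of degree $d$, and the base case hands you a point off $X_f$. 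Your version is constructive and avoids the counting of linear factors; the paper's version has the side benefit of making explicit the fact, used immediately after the lemma, that vanishing on all $\F_q$-points of a hyperplane forces the corresponding linear form to divide $f$. One small remark: choosing $e$ minimal is not actually needed for your argument to go through, since any single index with $f_e \neq 0$ already guarantees $f|_L \neq 0$; the "subtlety" you flag is not really there.
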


\begin{proof}
	Suppose $n=1$, i.e.\ $f$ is a binary form of degree $d$ that vanishes on all $q+1$ points of $\P^1(\F_q)$. Then $x_0^qx_1 - x_0x_1^q \mid f$, so we have $\deg f \geq q+1 > d$, a contradiction.
	
	We proceed by induction on $n$. If $f$ vanishes on all points in $\P^n(\F_q)$, then $f(x_0, \ldots, x_{n-1},0)$ is a (possibly now zero) degree $d$ form in $n$ variables $x_0,\ldots,x_{n-1}$ that vanishes on all $\F_q$-points of $\P^{n-1}$. By the induction hypothesis, it must indeed be the zero form, i.e.\ $x_n \mid f$.
	
	Repeating for all possible distinct (up to scaling) linear factors of $f$, of which there are more than $d$, we have found too many linear factors of $f$, forcing $f=0$, a contradiction.
\end{proof}

A consequence of Lemma \ref{lem:all_pts_vanish} is that when \qboundgeq{d} and $H \subset \P^n$ is a hyperplane defined over $\F_q$, $X_f$ vanishes on \textit{all $\F_q$-points of} $H$ if and only if $H \subseteq X_f$, i.e.\ $f$ has a linear factor which corresponds to $H$. Moving forward, we will often conflate these notions. 

Let us set a bit more notation. We refer to subvarieties $U \subset \P^n$ cut out by the vanishing of a collection of linear forms as \textit{linear subvarieties}. Given such $U$ defined over $\F_q$ of dimension $s$, we use $f|_U$ to refer to the form cutting out $X_f \cap U$. If $U \subset X_f$, then $f|_U = 0$; otherwise $f|_U$ is a nonzero degree $d$ form in $s+1$ variables. Note that $f|_U$ is only well defined up to choice of coordinates on $U$ and scaling, but this will not be of concern. 

Given linear subvarieties $U,V \subseteq \P^n$, their \textit{span}, denoted $\Span{U,V}$, is the minimal linear subvariety of $\P^n$ containing $U$ and $V$. In particular, when $U$ and $V$ are $\F_q$-points, $\Span{U,V}$ denotes the line between them.

Before specializing to $d=7,11$, we record a flexible version of the Hasse--Weil bound for counting smooth $\F_q$-points on a plane curve $C$ which allows for $C$ to have singularities or be reducible.

\begin{lemma}
\label{lem:finding_sm_pt}
    Let $C \subset \P^2$ be a plane curve defined over $\F_q$ of degree $d$. Suppose $C' \subset C$ is a geometrically integral component, defined over $\F_q$, of multiplicity 1 and degree $d'$. Denote by $g'$ the geometric genus of the normalization of $C'$. Then we have
    \begin{equation}\label{eq:ns_pt_bound}
        \#C(\F_q)^{\sm} \geq q+1 - g' \lfloor 2\sqrt{q} \rfloor - (d'-1)(d'-2) + 2g' - (d-d')d'.
    \end{equation}
\end{lemma}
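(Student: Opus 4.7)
The plan is to work with the normalization of $C'$ and track how $\F_q$-points are affected when passing first to smooth points of $C'$ and then to smooth points of $C$. Let $\pi \colon \widetilde{C'} \to C'$ denote the normalization; since $C'$ is geometrically integral and defined over $\F_q$, the same holds for $\widetilde{C'}$, which is therefore a smooth projective curve of genus $g'$ over $\F_q$. Applying the Hasse--Weil bound gives
\[\#\widetilde{C'}(\F_q) \geq q + 1 - g'\lfloor 2\sqrt{q}\rfloor.\]

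To compare $\#\widetilde{C'}(\F_q)$ to $\#C'(\F_q)^{\sm}$, I would observe that every $\F_q$-point of $\widetilde{C'}$ maps to an $\F_q$-point of $C'$ and the fibers over smooth points are singletons, so the loss comes entirely from $\F_q$-preimages of singular points. At a singular $P$ with $r_P$ geometric branches and $\delta$-invariant $\delta_P$, the standard inequality $\delta_P \geq r_P - 1$ combined with $\delta_P \geq 1$ gives $r_P \leq 2\delta_P$. Summing over all singular points of $C'$ and invoking the plane curve genus--degree formula $g' = \binom{d'-1}{2} - \sum_P \delta_P$, the total loss is bounded by $2\sum_P \delta_P = (d'-1)(d'-2) - 2g'$, so
\[\#C'(\F_q)^{\sm} \geq \#\widetilde{C'}(\F_q) - (d'-1)(d'-2) + 2g'.\]

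Finally, to pass from smooth $\F_q$-points of $C'$ to smooth $\F_q$-points of $C$, I would write $C = C' \cup C''$ with $\deg C'' = d-d'$ and note that a smooth point of $C'$ fails to be smooth on $C$ precisely when it also lies on $C''$. B\'ezout's theorem bounds $\#(C' \cap C'')(\F_q) \leq d'(d-d')$, and subtracting this from the previous estimate yields \eqref{eq:ns_pt_bound}.

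I expect the main subtlety to be the bookkeeping at singularities: one must confirm that the $\F_q$-rational preimages of singular $\F_q$-points in $\widetilde{C'}$ are counted correctly (Galois action can permute but not create branches beyond $r_P$), and that the crude estimate $r_P \leq 2\delta_P$ applies uniformly at every singular point. Once these two facts are in place, the inequality is obtained by assembling Hasse--Weil and B\'ezout as above.
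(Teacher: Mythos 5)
Your proof is correct and follows essentially the same route as the paper: Hasse--Weil (Serre's form) for the normalization, $\delta$-invariant bookkeeping at the singularities via the genus--degree formula to reach $\#C'(\F_q)^{\sm} \geq q+1-g'\lfloor 2\sqrt{q}\rfloor-(d'-1)(d'-2)+2g'$, and then B\'ezout for the intersection with the residual curve. The only difference is that you derive the singular-curve point count directly from the normalization (using $r_P \leq \delta_P+1 \leq 2\delta_P$), whereas the paper cites this as \cite[Corollary 1]{LeepYeomans1994_singular_curves} and separately bounds the number of singular $\F_q$-points by $\tfrac12(d'-1)(d'-2)-g'$; both give the identical intermediate inequality.
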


\begin{proof}
    We begin by counting points on $C'$. The usual Hasse--Weil bounds can be modified to allow for $C'$ to be singular, yielding
    \begin{equation}\label{eq:pt_ct_allowsing}
        \#C'(\F_q) \geq q+1 - g'\lfloor 2\sqrt{q} \rfloor - \frac12(d'-1)(d'-2) + g'
    \end{equation}
    (see e.g.\ \cite[Corollary 1]{LeepYeomans1994_singular_curves}). There are at most $\frac12(d'-1)(d'-2) - g'$ singular $\F_q$-points on $C'$, so from \eqref{eq:pt_ct_allowsing} we deduce
    \begin{align}\label{eq:pt_ct_sm}
        \nonumber \#C'(\F_q)^\sm &= \#C'(\F_q) - \#C'(\F_q)^{\mathrm{sing}}\\
        &\geq q+1 - g'\lfloor 2\sqrt{q} \rfloor - (d'-1)(d'-2) + 2g'.
    \end{align}

    A point in $C'(\F_q)^{\sm}$ is a smooth point of $C$ if it does not also lie on $C - C'$. By Bezout's theorem, there are at most $d'(d-d')$ such intersection points. Thus \eqref{eq:ns_pt_bound} follows from subtracting this from \eqref{eq:pt_ct_sm}.    
\end{proof}

\begin{remark}
    For constant $d'$ and \qbound{2}, the lower bound \eqref{eq:ns_pt_bound} is monotonically decreasing in $g'$. In other words, for $C'$ of fixed degree, the $q$-threshhold where the lower bound becomes positive is largest when $g'$ is as large as possible. 
\end{remark}

\subsection{\texorpdfstring{$d=7$}{d=7}}
\label{subsec:d=7}

We now specialize to the case of $d=7$, where $f(x_0,\ldots,x_n)$ is a degree 7 form over $\F_q$. 

\begin{lemma}
\label{lem:too_many_linears_d=7}
    Let \qboundgeq{7}. Suppose $f(x_0,\ldots,x_n)$ is a degree 7 form and there exist $k$ distinct linear subvarieties $U_1,\ldots,U_k \subset X_f \subset \P^n$ of dimension $s$ such that $\dim \Span{U_1,\ldots,U_k} = s+1$.
    \begin{enumerate}[label = (\roman*)]
        \item If $k \geq 4$ then $X_f(\F_q)^\sm \neq \emptyset$ or $\Span{U_1,\ldots,U_k} \subset X_f$.
        \item If $k =3$ and there exists $v \in X_f \cap \Span{U_1,U_2,U_3}$ such that $v \notin U_1,U_2,U_3$, then $X_f(\F_q)^\sm \neq \emptyset$ or $\Span{U_1,U_2,U_3} \subset X_f$.
    \end{enumerate}
\end{lemma}

\begin{proof}
    If $\Span{U_1,\ldots,U_k} \not\subset X_f$, then $f|_{\Span{U_1,\ldots,U_k}}$ is nonzero. In this case, each $U_i$ corresponds to a distinct linear factor of $f|_{\Span{U_1,\ldots,U_k}}$ defined over $\F_q$. 
    
    Both statements follow from the fact that when \qboundgeq{7}, a degree 7 form with a linear factor of multiplicity one has a smooth solution. If $U_1 \subset X_f \cap \Span{U_1,\ldots,U_k}$ corresponds to such a factor of $f|_{\Span{U_1,\ldots,U_k}}$, then 
    \[\#U_1(\F_q) = \frac{q^{s+1} - 1}{q-1} \quad\text{and}\quad \#(U_1 \cap ((X_f \cap \Span{U_1,\ldots,U_k}) - U_1))(\F_q) \leq 6\frac{q^{s} - 1}{q-1},\]
    so there exists $u \in (U_1 - (X_f - U_1))(\F_q) \subset X_f(\F_q)^\sm$. 

    If $X_f(\F_q) = \emptyset$, then $f|_{\Span{U_1,\ldots,U_k}}$ has no linear factors of multiplicity one, hence it can have at most three linear factors, proving (i). If it has exactly three such factors, then it must have type $(1^3,1^2,1^2)$. This implies $X_f = U_1 \cup U_2 \cup U_3$, proving (ii) by contrapositive.
\end{proof}

We next give an intermediate result in a similar spirit to \cite[Lemma 5.3]{LeepYeomans1996_quintic_forms}.

\begin{lemma}
\label{lem:7.3}
    Let \qboundgeq{7}. Suppose $U \subset X_f$ is a linear subvariety of positive dimension defined over $\F_q$, and $v \in X_f(\F_q) - U$. If there exist distinct codimension 1 linear subvarieties $U_1^{(1)}, U_2^{(1)}, U_3^{(1)} \subset U$ such that
    \[\Span{U_i^{(1)}, v} \subset X_f \text{ for } i =1,2,3,\]
    then either $X_f(\F_q)^{\sm} \neq \emptyset$ or $\Span{U,v} \subset X_f$.
\end{lemma}

\begin{proof}
    Let $s = \dim U \geq 1$. If $f|_{\Span{U,v}} \neq 0$, we apply Lemma \ref{lem:too_many_linears_d=7}(i) to $U, \Span{U_1^{(1)},v}, \Span{U_2^{(1)},v}, \Span{U_3^{(1)},v}$ within $\Span{U,v} \simeq \P^{s+1}$. If $f|_{\Span{U,v}} = 0$ then $\Span{U,v} \subset X_f$.
\end{proof}

Next we examine the situation of a plane curve of degree 7 over $\F_q$ which has at least three non-colinear $\F_q$-points and no linear components defined over $\F_q$. Unfortunately, we cannot guarantee the existence of a smooth $\F_q$-point, even when $q$ is sufficiently large. We can, however, find either a smooth $\F_q$-point or a line spanned by (singular) $\F_q$-points which elsewhere meets the curve at a nonsingular geometric point.

\begin{lemma}
\label{lem:plane_with_3pts}
    Let \qbound{591}. Suppose $f \in \F_q[x_0, x_1, x_2]$ is a nonzero ternary degree 7 form with no linear factors defined over $\F_q$. Suppose further that there exist $u,v,w \in X_f(\F_q)$ such that $\Span{u,v,w} = \P^2$. Then either $X_f(\F_q)^{\sm} \neq \emptyset$ or there exist $y,z \in X_f(\F_q)$ such that $\Span{y,z} \not\subset X_f$ and $X_f \cap \Span{y,z}$ contains a geometric point of multiplicity 1.
\end{lemma}

\begin{proof}
    If any of $u,v,w \in X_f(\F_q)^\sm$, then we are done, so we assume they are all singular. Since $f$ has no linear factors defined over $\F_q$, the possible factorization types of $f$ over $\F_q$ are $(7), (5, 2), (4, 3), (3, 2, 2), (3, 2^2)$. 

    Suppose one of these factors of multiplicity 1 remains irreducible over $\overline{\F_q}$. Let $C' \subset X_f$ denote the corresponding irreducible component curve of degree $d'$, defined over $\F_q$ and absolutely irreducible. If $d' = 7$ then the geometric genus of the normalization of $X_f$ is at most 12, owing to the presence of the singular points $u,v,w$. Lemma \ref{lem:finding_sm_pt} gives that \qbound{591} guarantees $X_f(\F_q)^{\sm} \neq \emptyset$. If $d' < 7$ then the hypothesis \qbound{591} is sufficient to show $X_f(\F_q)^\sm \neq \emptyset$ by Lemma \ref{lem:finding_sm_pt}.

    Assume now that all of the $\F_q$-factors of $f$ with multiplicity 1 are reducible over $\overline{\F_q}$. In the case of factorization type (7), the only possibility is that $f$ factors into 7 linear forms conjugate over $\F_{q^7}$. Such $f$ can vanish on at most one $\F_q$-point, contradicting our hypothesis that $u,v,w \in X_f(\F_q)$. Similarly, in the case of $(5,2)$ we have $f$ can vanish on at most two $\F_q$-points, again contradicting the hypothesis.

    In the remaining factorization types we see that $f$ has a cubic factor which is geometrically reducible, necessarily factoring as a product of three linear forms conjugate over $\F_{q^3}$. Let $L_1, L_2, L_3$ be the corresponding lines in the plane defined over $\F_{q^3}$.

    Suppose $L_1, L_2, L_3$ meet at a common point $x \in X_f(\F_q)$. Since the points $u,v,w$ are not collinear, we must have that $x$ does not lie on at least one of the lines $\Span{u,v}, \Span{u,w}, \Span{v,w}$. Without loss of generality, assume $x \notin \Span{u,v}$. Thus $f|_{\Span{u,v}}$ vanishes to order 2 on $u$ and $v$ and order 1 on the three distinct $\F_{q^3}$-points $\Span{u,v} \cap L_i$, so we may take $y = u$ and $z = v$.

    If $L_1, L_2, L_3$ do not meet at a common point, then they contain no $\F_q$-points. Again considering the line $\Span{u,v}$, we have that at most two of of the $\F_{q^3}$-points $\Span{u,v} \cap L_i$ can coincide. Thus $f|_{\Span{u,v}}$ vanishes to order 2 on $u$ and $v$ and order 1 on at least one of the $\Span{u,v} \cap L_i$, so we may take $y = u$ and $z = v$.
\end{proof}

We conclude by giving what amounts to a higher dimensional version of Lemma \ref{lem:plane_with_3pts}.

\begin{proposition}
\label{prop:codim3bert_d=7}
    Let \qbound{591}. Suppose $f \in \F_q[x_0,\ldots,x_n]$ is a degree 7 form defined over $\F_q$. Let $U, V, W \subset X_f$ be distinct linear subvarieties of dimension $s$ such that $\dim U \cap V \cap W = s-1$, $\dim \Span{U,V,W} = s+2$, and $f|_{\Span{U,V,W}}$ is nonzero with no linear factors defined over $\F_q$. Then $X_f(\F_q)^\sm = \emptyset$ or there exist $y,z \in X_f(\F_q)$ such that $\Span{y,z} \not\subset X_f$ and $X_f \cap \Span{y,z}$ contains a geometric point of multiplicity 1.
\end{proposition}

\begin{proof}
    The $s=0$ case is Lemma \ref{lem:plane_with_3pts}. Assume $s \geq 1$. Replacing $f$ by $f|_{\Span{U,V,W}}$, we may assume $n=s+2 \geq 3$, $\Span{U,V,W} = \P^n$, and $f$ has no linear factors defined over $\F_q$. 
    
    After a change of coordinates, we may assume the intersection $U \cap V \cap W$, codimension 3 in $\P^n$, is given by
    \[U \cap V \cap W \colon x_{n-2} = x_{n-1} = x_n = 0.\]
    For a generic plane $P \subset \P^n$, we have $P \cap U \cap V \cap W = \emptyset$. Moreover, this holds for any plane $P$ given by
    \[P \colon \begin{cases}
        x_0 = Z, \\
        x_1 = X +\v_1Z, \\
        x_2 =\w_2X + \z_2Y +\v_2Z, \\
        \vdots \\
        x_n =\w_nX + \z_nY +\v_nZ,
    \end{cases} \text{ satisfying } \det \begin{pmatrix}
		\v_{n-2} &\w_{n-2} &\z_{n-2} \\
		\v_{n-1} &\w_{n-1} &\z_{n-1} \\
		\v_{n} &\w_{n} &\z_{n} 
	\end{pmatrix} \neq 0.\]
    When $n=3$, we set $w_{n-2}=1$ and $z_{n-2}=0$ above. 

    Let $\Upsilon' \in \F_q[\vvar_1,\ldots,\vvar_n,\wvar_2,\ldots,\wvar_n]$ be the generic determinant above,
    \begin{equation}
    \label{eq:precond_v,w}
    \Upsilon' = (\vvar_{n-1}\wvar_n - \vvar_n\wvar_{n-1})(\vvar_{n-2}\wvar_n - \vvar_n\wvar_{n-2})(\vvar_{n-2}\wvar_{n-1} - \vvar_{n-1}\wvar_{n-2}).
    \end{equation}
    For any $(\bv,\bw)$ satisfying $\Upsilon'(\v_1,\ldots,\v_n,\w_2,\ldots,\w_n) \neq 0$, we define $\Xi \in \F_q[\zvar_2,\ldots,\zvar_n]$ by
    \[\Xi = \zvar_{n-2}(\v_{n-1}\w_n -\v_n\w_{n-1}) - \zvar_{n-1}(\v_{n-2}\w_n -\v_n\w_{n-2}) + \zvar_n(\v_{n-2}\w_{n-1} -\v_{n-1}\w_{n-2}).\]
    which is necessarily nonzero.

    We now apply an effective Bertini argument which is essentially a minor modification of Theorem \ref{thm:effBertini_existence}(ii). Let $\Upsilon$ be the polynomial of degree 98 constructed in Lemma \ref{lem:upsilon}. Since \qbound{104}, there exist $(\bv,\bw)$ such that $\Upsilon(\bv,\bw)\Upsilon'(\bv,\bw) \neq 0$. 

    Suppose $f$ has factorization type $(d_1^{e_1},\ldots,d_t^{e_t})$ over $\overline{\F_q}$. For $1 \leq i \leq t$, set
    \[D_i = \begin{cases} 0 & d_i = 1, \\ 1 & d_i > 1. \end{cases}\]
    Consider 
    \[\Psi = \prod_{1 \leq i \leq t} \prod_{1 \leq j \leq d_i}\Psi^{(\alpha_{j,0})}_{i,D_i}\]
    as in the proof of Theorem \ref{thm:effBertini}(ii). We have
    \[\deg \Psi \Xi \leq 1 + \sum_{1 \leq i \leq t} \frac{d_iD_i}{8}\left(-D_i^3 + 4d_iD_i^2 - 6D_i^2 + 12d_iD_i - 11D_i + 8d_i - 6\right).\]
    In particular, when \qbound{\deg \Psi\Xi}, there exists $\bz \in \F_q^{n-1}$ such that $\Psi(\bz)\Xi(\bz) \neq 0$. A straightforward computation shows $\deg \Psi \Xi \leq 127$, so our hypotheses on $q$ more than suffice for this purpose.

    Since $\Xi(\bv,\bw,\bz) \neq 0$, the plane $P$ satisfies $P \cap U \cap V \cap W = \emptyset$. In particular, the points
    \[u = P \cap U, v = P \cap V, w = P \cap W\]
    are distinct, noncolinear points in $(X_f \cap P)(\F_q)$. Moreover, since $f|_P$ has the same factorization type as $\fvwz = 0$, and the latter only has linear factors inherited from $f$ itself which remain distinct in $\fvwz$, any linear factors of $f|_P$ are defined over $\overline{\F_q}$ but not $\F_q$. Thus $f|_P$ satisfies the hypotheses of Lemma \ref{lem:plane_with_3pts} and we are done.
\end{proof}

\subsection{\texorpdfstring{$d=11$}{d=11}}
\label{subsec:d=11}

The same arguments in degree 11 give analogues of Lemmas \ref{lem:too_many_linears_d=7}, \ref{lem:7.3}, which we use to give analogues of Lemma \ref{lem:plane_with_3pts} and Proposition \ref{prop:codim3bert_d=7}.

\begin{lemma}
\label{lem:too_many_linears_d=11}
    Let \qboundgeq{11}. Suppose $f(x_0,\ldots,x_m)$ is a degree 11 form and there exist $k$ distinct linear subvarieties $U_1,\ldots,U_k \subset X_f \subset \P^n$ of dimension $s$ such that $\dim \Span{U_1,\ldots,U_k} = s+1$.
    \begin{enumerate}[label = (\roman*)]
        \item If $k \geq 6$ then $X_f(\F_q)^\sm \neq \emptyset$ or $\Span{U_1,\ldots,U_k} \subset X_f$.
        \item If $k =5$ and there exists $v \in X_f \cap \Span{U_1,\ldots,U_5}$ such that $v \notin U_1,\ldots,U_5$, then $X_f(\F_q)^\sm \neq \emptyset$ or $\Span{U_1,\ldots,U_5} \subset X_f$.
    \end{enumerate}
\end{lemma}

\begin{lemma}
\label{lem:11.3}
    Let \qboundgeq{11}. Suppose $U \subset X_f$ is a linear subvariety of positive dimension defined over $\F_q$, and $v \in X_f(\F_q) - U$. If there exist distinct codimension 1 linear subvarieties $U_1^{(1)}, \ldots, U_5^{(1)} \subset U$ such that
    \[\Span{U_i^{(1)}, v} \subset X_f \text{ for } i =1,\ldots,5,\]
    then either $X_f(\F_q)^{\sm} \neq \emptyset$ or $\Span{U,v} \subset X_f$.
\end{lemma}

\begin{lemma}
\label{lem:plane_with_3pts_d=11}
    Let \qbound{7061}. Suppose $f \in \F_q[x_0, x_1, x_2]$ is a nonzero ternary degree 11 form with no linear factors defined over $\F_q$. Suppose further that there exist $u,v,w \in X_f(\F_q)$ such that $\Span{u,v,w} = \P^2$. Then either $X_f(\F_q)^{\sm} \neq \emptyset$ or there exist $y,z \in X_f(\F_q)$ such that $\Span{y,z} \not\subset X_f$ and $X_f \cap \Span{y,z}$ contains a geometric point of multiplicity 1.
\end{lemma}

\begin{proof}
    The possible factorization types of $f$ over $\F_q$ are
    \begin{align*}
        (11), (9,2), (8,3), 
        (7,4), (7,2,2), (7,2^2), 
        (6,5), (6,3,2), \\
        (5,4,2), (5,3,3), (5,3^2), (5,2,2,2), (5,2^2,2), (5,2^3),
        (4,4,3), (4^2,3), (4,3,2,2), (4,3,2^2),\\
        (3,3,3,2), (3^2,3,2), (3^3,2), (3,2,2,2,2), (3,2^2,2,2), (3,2^2,2^2), (3,2^3,2), \text{ or } (3,2^4).
    \end{align*}
    If any of the factors of multiplicity 1 defined over $\F_q$ are irreducible over $\overline{\F_q}$, then we can use Lemma \ref{lem:finding_sm_pt} to find $X_f(\F_q)^\sm \neq \emptyset$, as in the proof of Lemma \ref{lem:plane_with_3pts}. The limiting case is that of $f$ geometrically irreducible, where the presence of the three singular $\F_q$-points $u,v,w$ means \qbound{7061} suffices.

    Write the factorization over $\F_q$ as $f = \prod_{1 \leq i \leq t} g_i^{e_i}$, i.e.\ if $f$ has type $(d_1^{e_1},\ldots,d_t^{e_t})$ over $\F_q$, then $g_i$ is defined over $\F_q$ and has degree $d_i$. Assume for all $i$ with $e_i = 1$, we have $g_i$ is geometrically reducible; otherwise we are in the situation above.
    
    Suppose there exists an $i$ with the following properties:
    \begin{enumerate}[label = (\roman*)]
        \item $d_i$ is prime,
        \item $e_i = 1$,
        \item $d_i > \frac{d_j}{2}$ for all $j \neq i$ with $e_j = 1$,
        \item $d_i > d_j$ for all $j \neq i$ with $e_j > 1$.
    \end{enumerate}
    Since $g_i$ is reducible over $\overline{\F_q}$, it must factor into $d_i$ linear forms defined over $\F_{q^{d_i}}$, cyclically permuted by the action of $\mathrm{Gal}(\F_{q^{d_i}}/\F_q)$. In particular, $g_i$ vanishes on at most one $\F_q$-point, so without loss of generality it vanishes on $d_i$ distinct points on the line $\Span{u,v}$ defined (and conjugate) over $\F_{q^{d_i}}$.

    Consider now how the $g_j$ vanish on $\Span{u,v}$ for $j \neq i$. If $e_j = 1$, then $g_j$ --- and its restriction to the line $\Span{u,v}$ --- factors into forms of degree at most $d_j/2$ over $\overline{\F_q}$. By (iii), the degree of these factors of $g_j$ is too small to vanish on any $\F_{q^{d_i}}$-points of the line. The same argument works for $e_j > 1$, since we have the stronger hypothesis (iv). Thus $u,v \in X_f(\F_q)$ satisfy the desired requirements, with the zeros of $g_i$ on this line providing the necessary geometric point(s) of multiplicity 1. 

    Conditions (i) -- (iv) are satisfied for all types except $(11)$, $(9,2)$, $(8,3)$, $(6,3,2)$, $(4^2,3)$, $(3^2,3,2)$, and $(3^3,2)$. We deal with the stragglers ad hoc.

    \begin{itemize}\setlength{\itemsep}{1ex}
        \item[$(11)$] If $f$ is geometrically reducible, then it must factor into 11 conjugate linear forms defined over $\F_{q^{11}}$. However, this is not possible, as such $f$ vanishes on at most one $\F_q$-point.

        \item[$(9,2)$] If $g_1$ is not geometrically irreducible, it must factor into three conjugate cubic forms $h,h',h''$ over $\F_{q^3}$. As above, we may assume the quadratic factor $g_2$ vanishes on two distinct $\F_{q^2}$-points of $\Span{u,v}$. Since $g_1$ must then vanish on both $u,v \in X_f(\F_q)$, each of $h,h',h''$ must vanish on both $u$ and $v$, and hence at most one more $\F_q$-point on $\Span{u,v}$, so the line $\Span{u,v}$ satisfies the desired requirements.

        \item[$(8,3)$] If $g_1$ is not geometrically irreducible, it must factor into two conjugate quartic forms $h,h'$ over $\F_{q^2}$, while we may assume the cubic factor $g_2$ vanishes on three distinct conjugate $\F_{q^3}$-points of $\Span{u,v}$. As in the $(9,2)$ case, $h,h'$ are forced to vanish on $u,v$, so they cannot vanish on any $\F_{q^3}$-points along $\Span{u,v}$.

        \item[$(6,3,2)$] Here $g_1$ can factor in two ways. Suppose it factors into conjugate cubic forms $h,h'$ defined over $\F_{q^2}$ and assume the cubic factor $g_2$ vanishes on three distinct $\F_{q^3}$-points of $\Span{u,v}$. The quadratic factor $g_3$ vanishes on at most one of $u$ or $v$, thus $h,h'$ must both vanish on (at least) the other. Their degrees are then too small to also vanish on any $\F_{q^6}$-points on $\Span{u,v}$, nor in particular any $\F_{q^3}$-points, so we are done.

        If instead $g_1$ factors into conjugate quadratic forms $h,h',h''$ defined over $\F_{q^3}$, a similar argument shows they all must vanish on at least one of $u$ or $v$ and one other $\F_q$-point on the line $\Span{u,v}$, while the quadratic factor $g_3$ vanishes on distinct $\F_{q^2}$-points of $\Span{u,v}$. The cubic factor $g_2$ meets the line at an $\F_q$-point or distinct $\F_{q^3}$-points, but eitherway the requirements of the lemma are met.

        \item[$(4^2,3)$] Assume the cubic factor $g_2$ meets the line $\Span{u,v}$ at three distinct $\F_{q^3}$-points. For degree reasons only, the quartic factor $g_1$ can vanish on at most two of them, since it must vanish on $u$ and $v$.
        
        \item[$(3^2,3,2)$] Suppose the cubic factor $g_1$ is irreducible over $\overline{\F_q}$. The cubic factor $g_2$ vanishes on at most one $\F_q$-point in the plane. Let $y,z \in X_{g_1}(\F_q)$ be distinct points such that $\Span{y,z}$ avoids this point (if it exists); note that the hypotheses on $q$ ensure more than enough $\F_q$-solutions to $g_1=0$ for this. On the line $\Span{y,z}$, $g_1$ vanishes on $y$, $z$, and at least one more point, necessarily defined over $\F_q$; the cubic factor $g_2$ vanishes in three distinct $\F_{q^3}$-points, while the quadratic $g_3$ vanishes on an $\F_q$-point or two $\F_{q^2}$-points. In either case, the zeros of $g_2$ along this line provide the necessary points of multiplicity 1. 

        If $g_1$ is reducible over $\overline{\F_q}$, then in fact $X_f(\F_q) = \{u,v,w\}$, with each of $g_1, g_2, g_3$ vanishing on exactly one. If $u \in X_{g_1}$, then either of $\Span{u,v}$ or $\Span{u,w}$ satisfy the desired requirements.

        \item[$(3^3,2)$] Assume the quadratic factor $g_2$ meets the line $\Span{u,v}$ at two distinct $\F_{q^2}$-points. As in the $(4^2,3)$ case, the cubic factor $g_1$ can vanish on at most one of them, since it must vanish on $u$ and $v$.
\end{itemize}\vspace{-1ex}
\end{proof}

\begin{proposition}
\label{prop:codim3bert_d=11}
    Let \qbound{7061}. Suppose $f \in \F_q[x_0,\ldots,x_n]$ is a degree 11 form defined over $\F_q$. Let $U, V, W \subset X_f$ be distinct linear subvarieties of dimension $s$ such that $\dim U \cap V \cap W = s-1$, $\dim \Span{U,V,W} = s+2$, and $f|_{\Span{U,V,W}}$ is nonzero with no linear factors defined over $\F_q$. Then $X_f(\F_q)^\sm = \emptyset$ or there exist $y,z \in X_f(\F_q)$ such that $\Span{y,z} \not\subset X_f$ and $X_f \cap \Span{y,z}$ contains a geometric point of multiplicity 1.
\end{proposition}

\begin{proof}
    The proof is essentially identical to that of Proposition \ref{prop:codim3bert_d=7}, except that we reduce to Lemma \ref{lem:plane_with_3pts_d=11} instead of Lemma \ref{lem:plane_with_3pts}. This accounts for the hypothesis \qbound{7061}. We briefly sketch the other differences.
    
    The polynomial $\Upsilon$ from Lemma \ref{lem:upsilon} now has degree at most 242, while $\Upsilon'$ is the same as in the proof of Proposition \ref{prop:codim3bert_d=7}. Thus there exists $(\bv,\bw) \in \F_q^{2m-2}$ for which $\Upsilon\Upsilon' \neq 0$. 
    
    With $\Psi$ defined the same way as in the proof of Proposition \ref{prop:codim3bert_d=7}, its degree is at most 330 by Theorem \ref{thm:effBertini_modified}. The polynomial $\Xi$ is again linear. Thus $q$ is sufficiently large to guarantee the existence of $\bz$ with $\Psi \Xi \neq 0$. 

    Finally, $\fvwz$ has no linear factors defined over $\F_q$, corresponding to a plane $P$ not containing $U \cap V \cap W$ such that $X_f \cap P$ has no linear components defined over $\F_q$. Since $P$ meets $U, V, W$ in distinct noncolinear points $u,v,w$, the hypotheses of Lemma \ref{lem:plane_with_3pts_d=11} are satisfied, completing the proof.
\end{proof}

\section{Reduced forms and consequences}
\label{sec:reduced_forms_and_consequences}

We recall the setting of Artin's conjecture for degree $d$ forms. $K/\Q_p$ is a $p$-adic field with valuation ring $\O_K$, uniformizer $\pi$, normalized discrete valuation $v$, and residue field $\O_K/(\pi) \simeq \F_q$. We have a homogeneous form $F \in K[x_0, \ldots, x_n]$ of degree $d$ for $n \geq d^2$. After rescaling, we may assume $F \in \O_K[x_0, \ldots, x_n]$ such that $\overline{F} \neq 0$ in $\F_q[x_0, \ldots, x_n]$. We will use the lowercase $f = \overline{F}$ to denote the reduction to the residue field $\F_q$, and $X_f/\F_q$ for the special fiber of $X_F$ over $\O_K$.

By Hensel's lemma, if $X_f(\F_q)^{\sm} \neq \emptyset$, then $X_F(K) \neq \emptyset$. Therefore our goal will be to show that $X_f$ has a smooth $\F_q$-point when $q$ is sufficiently large. A key strategy in achieving this was introduced by Birch and Lewis \cite{BirchLewis1959} in the degree $d=5$ case, then later refined by Laxton and Lewis \cite{LaxtonLewis1965} to apply in degrees $d=7, 11$. They define a notion of \textit{reduced forms} which we recall now before establishing the relevant consequences. Both \cite{LaxtonLewis1965} and \cite[\S 4]{LeepYeomans1996_quintic_forms} are useful references.

\subsection{Reduced forms}
\label{sec:reduced_forms}

With $F$ as above, let 
\[I(F) = \mathrm{Res}\left(\frac{\partial F}{\partial x_0}, \ldots, \frac{\partial F}{\partial x_n}\right) \in \O_K.\]
By the general theory of (multivariate) resultants, $I(F)$ is homogeneous of degree $(d-1)^{n}$ in the coefficients of the partial derivatives of $F$. We also have \cite[Lemma 5]{LaxtonLewis1965}
\[I(aF(T\mathbf{x})) = a^{(n+1)(d-1)^n}(\det T)^{d(d-1)^n}I(F)\]
for $a \in K^\times$ and $T \in \GL_{n+1}(K)$ acting on the variables $(x_0, \ldots, x_n)$ by linear change of coordinates.

\begin{definition}[equivalent forms]
    Two degree $d$ forms $F,F' \in \O_K[x_0, \ldots, x_n]$ are \textbf{equivalent} if there exists $a \in K^\times$ and $T \in \GL_{n+1}(K)$ such that $F' = aF(T\mathbf{x})$.
\end{definition}

\begin{definition}[reduced form]
    We say a degree $d$ form $F \in \O_K[x_0, \ldots, x_n]$ is \textbf{reduced} if $I(F) \neq 0$ and 
    $v(I(F)) \leq v(I(aF(T\mathbf{x})))$ for all $a \in K^\times$ and $T \in \GL_{n+1}(K)$ such that $aF(T\mathbf{x})$ has coefficients in $\O_K$. That is, $I(F)$ has minimal valuation among equivalent forms with coefficients in $\O_K$. 
\end{definition}

\begin{proposition}[{\cite{LaxtonLewis1965, LeepYeomans1996_quintic_forms}}]
\label{prop:reduced_facts}
    We have the following facts about reduced forms.
    \begin{enumerate}[label = (\roman*)]
        \item If for all \textit{reduced} degree $d$ forms $F \in \O_K[x_0, \ldots, x_n]$ we have $X_F(K) \neq \emptyset$ then $X_F(K) \neq \emptyset$ for all degree $d$ forms $F \in \O_K[x_0, \ldots, x_n]$, i.e.\ $K$ is $C_2(d)$.

        \item If $n \geq d^2$ and $F$ is a reduced degree $d$ form, then its reduction $f = \overline{F}$ has no linear factors defined over $\overline{\F_q}$.

        \item Let \qboundgeq{d}. If $n \geq d^2$, $F$ is a reduced degree $d$ form, and $f$ vanishes on an $s$-dimensional linear subvariety $U \simeq \P^s$ defined over $\F_q$, then
        \[\#X_f(\F_q) \geq \frac{q^{s+2} - 1}{q-1}.\]
    \end{enumerate}
\end{proposition}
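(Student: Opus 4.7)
The three parts use the minimization property defining reducedness via different strategies.

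For part (i), the plan is to use that an equivalence $F' = aF(T\mathbf{x})$ with $a \in K^\times$, $T \in \GL_{n+1}(K)$ induces a bijection on $K$-zeros via $T$, so $X_F(K) \neq \emptyset$ iff $X_{F'}(K) \neq \emptyset$. Given $F \in \O_K[\mathbf{x}]$ of degree $d$ with $I(F) \neq 0$, the set $\{v(I(aF(T\mathbf{x}))) : aF(T\mathbf{x}) \in \O_K[\mathbf{x}]\}$ is a nonempty subset of $\Z_{\geq 0}$ and so admits a minimum; any form attaining this minimum is reduced, and thus every equivalence class with $I \neq 0$ contains a reduced representative. The degenerate case $I(F) = 0$ requires a separate Galois descent argument to show that $F$ is equivalent over $K$ to a form in strictly fewer variables, enabling induction on $n$.

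For part (ii), the plan is proof by contradiction via rescaling. Assume $f$ has a linear factor over $\F_{q^r}$ for some $r \geq 1$. First pass to the unramified extension $K'/K$ with residue field $\F_{q^r}$, after verifying that $F$ remains reduced over $K'$ (the invariant $I$ and the transformation group being compatible with unramified base change), then perform a $K'$-change of coordinates so that the factor lifts to $x_0$, giving $F = x_0 G + \pi H$ for integral $G, H$. The transformation $T : x_0 \mapsto \pi x_0$ with $a = \pi^{-1}$ produces
\[F' = \pi^{-1} F(\pi x_0, x_1, \ldots, x_n) = x_0 G(\pi x_0, \ldots) + H(\pi x_0, \ldots) \in \O_{K'}[\mathbf{x}],\]
and the transformation formula for $I$ gives $v(I(F')) = v(I(F)) + (d-1)^n (d - n - 1)$, strictly less than $v(I(F))$ whenever $n \geq d^2$, contradicting reducedness.

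For part (iii), the plan is to project $\P^n - U$ from $U$ onto a complementary linear subvariety $V \simeq \P^{n-s-1}$ and analyze the fibers $\Span{U, v} \simeq \P^{s+1}$ for $v \in V(\F_q)$. If $f|_{\Span{U, v}} = 0$ for some $v$, then $\Span{U, v} \subset X_f$ immediately contributes all $\frac{q^{s+2}-1}{q-1}$ of its $\F_q$-points, matching the bound. Otherwise $f|_{\Span{U, v}} = \ell_v^{m_v} h_v$ with $\ell_v$ defining $U$ inside $\Span{U, v}$ and $h_v$ not divisible by $\ell_v$; the pathological scenario in which no $\Span{U, v}$ lies entirely in $X_f$ should then be ruled out by reducedness via a composite rescaling $x_i \mapsto \pi x_i$ for $0 \leq i \leq n-s-1$ of the coordinates cutting out $U$, analogous to but more intricate than part (ii).

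The hardest step is expected to be part (iii): parts (i) and (ii) follow fairly directly once the invariant-minimization setup is in place, whereas (iii) requires carefully tracking the multiplicity structure of $X_f$ along $U$ against the change in $v(I(F))$ under composite rescalings, so as to extract either an $(s+1)$-dimensional linear subvariety of $X_f$ through $U$ (yielding the bound directly) or a contradiction to reducedness.
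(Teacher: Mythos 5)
Each of your three reconstructions has a genuine gap (the paper's own proof is a citation to Laxton--Lewis and Leep--Yeomans together with a sketch of the order-function method, so the content you need to supply is exactly what the gaps concern). In (i), the existence of a reduced representative in each equivalence class with $I(F)\neq 0$ is fine, but your treatment of the degenerate case rests on a false premise: $I(F)=0$ means the partial derivatives have a common projective zero over $\overline{K}$, i.e.\ $X_F$ is singular --- not that $F$ is expressible in fewer variables (consider $x_0^2x_1$), and a geometric singular point need not descend to $K$. The argument in \cite{LaxtonLewis1965} is instead topological: the set of coefficient vectors of forms with a nontrivial $K$-zero is closed (by compactness of $\P^n(K)$), and the forms with $I\neq 0$ are $p$-adically dense, so solubility of all forms with $I \neq 0$ forces solubility of all forms.

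In (ii), the step you defer --- ``verifying that $F$ remains reduced over $K'$'' --- is false as a general principle: $x_0^2+x_1^2+px_2^2$ with $p\equiv 3\pmod 4$ is anisotropic, hence reduced, over $\Q_p$, yet over $\Q_{p^2}$ its reduction acquires the linear factor $x_0+\sqrt{-1}\,x_1$ and your own rescaling then strictly decreases $v(I)$. A telltale sign that the content has been swept into this unproved claim is that your valuation computation only needs $d-n-1<0$, i.e.\ $n\geq d$, never $n\geq d^2$. The hypothesis $n \geq d^2$ enters precisely when one stays over $K$: the $r\leq d$ Galois conjugates of the linear factor all divide $f$, their product is a degree-$r$ form over $\F_q$ involving (after a $K$-rational change of variables) only $r$ coordinates, and rescaling those $r$ coordinates by $\pi$ and dividing by $\pi$ changes $v(I)$ by $(d-1)^n(rd-n-1)$, which is negative exactly because $n\geq d^2\geq rd$. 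For (iii) you give no argument, and the proposed dichotomy --- either find $\Span{U,v}\subset X_f$ or contradict reducedness --- cannot be correct: when $U$ has maximal dimension among linear subvarieties of $X_f$ (the situation exploited in Lemma \ref{lem:skew_points} and Proposition \ref{prop:leep_and_yeomanry}) one has $\Span{U,v}\not\subset X_f$ for every $v$, yet $F$ is reduced; iterating your dichotomy would force $f=0$. The actual proof of (iii) produces no linear subvariety at all: reducedness gives a lower bound on the order of $f$ (the minimal number of variables needed after a linear change), and Warning's estimate then converts that order bound plus the existence of the zero locus $U$ into the stated point count.
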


\begin{proof}
    It is established in \cite[Corollary]{LaxtonLewis1965} that to show every $F$ has a $K$-solution, it suffices to show this only for forms with $I(F) \neq 0$. After possibly replacing $F$ by an equivalent form, we may assume $I(F)$ has minimal valuation, i.e.\ $F$ is reduced. This does not change solubility, so it suffices to check $X_F(K) \neq 0$ for reduced forms, giving (i).

    (ii) is \cite[Lemma 9]{LaxtonLewis1965}, while (iii) is \cite[Corollary 4.4]{LeepYeomans1996_quintic_forms}. In both cases, the idea is to use the fact that $F$ is a reduced form to bound the \textit{order} of $f$ (or its factors), i.e.\ to bound the minimal number of variables appearing nontrivially in $f$, up to change of coordinates. To prove (ii), one understands that linear factors have small order which is at odds with these lower bounds; for (iii) an order estimate is combined with Warning's estimate for $\#X_f(\F_q)$ (see e.g.\ \cite[Lemma 3.1]{LeepYeomans1996_quintic_forms}).
\end{proof}

\subsection{Consequences}
\label{subsec:consequences}

Drawing inspiration from the work of Leep and Yeomans in the quintic case \cite{LeepYeomans1996_quintic_forms}, we seek to exploit what we know about reduced forms to produce a nonsingular point of $X_f$ which can be lifted to $X_F(K)$.

Our immediate goal, accomplished below in Proposition \ref{prop:leep_and_yeomanry}, is show that either $X_f(\F_q)^\sm \neq \emptyset$ or we can produce two points $x,y \in X_f(\F_q)$ such that $f$ possesses a nonsingular \textit{geometric} point on the line between them. Later, in the proof of Theorem \ref{thm:deg711}, we will use this in conjunction with Theorem \ref{thm:effBertini_modified} to produce a planar slice of $X_f$ with at least two (possibly singular) $\F_q$-points. The presence of these two $\F_q$-points is key to improving on \cite[Theorem 1]{Wooley2008_7ic_11ic}.

As in \cite{LeepYeomans1996_quintic_forms}, we will make use of a linear subvariety $U \subset X_f$ of maximal dimension. In the following lemma, we show that not only can we find points $v,w \in (X_f - U)(\F_q)$, but we can ensure the line $\Span{v,w}$ is skew to $U$. This is accomplished by strengthening an argument presented in \cite[Proof of Prop.\ 5.4]{LeepYeomans1996_quintic_forms}.

\begin{lemma}
\label{lem:skew_points}
    Let \qbound{d} and suppose $F \in \O_K[x_0, \ldots, x_n]$ is a reduced degree $d$ form for $n \geq d^2$. Suppose $U \subset X_f$ is a linear subvariety defined over $\F_q$ of maximal dimension. Then either $X_f(F_q)^{\sm} \neq \emptyset$ or there exist $v,w \in X_f(\F_q)$ such that 
    \[\Span{v,w} \not\subset X_f \quad\text{and}\quad \Span{v,w} \cap U = \emptyset.\]
\end{lemma}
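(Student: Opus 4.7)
The plan is to argue by contradiction, assuming $X_f(\F_q)^{\sm} = \emptyset$ and that every pair $v, w \in X_f(\F_q)$ with $\Span{v, w} \cap U = \emptyset$ satisfies $\Span{v, w} \subset X_f$, and to derive a contradiction with the maximality of $\dim U$. Set $s := \dim U$ and $p_i := (q^{i+1} - 1)/(q - 1)$.

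By Proposition~\ref{prop:reduced_facts}(iii), $\#X_f(\F_q) \geq p_{s+1} > p_s = \#U(\F_q)$, so we can fix $v \in (X_f - U)(\F_q)$. Maximality of $U$ forces $\Span{U, v} \not\subset X_f$, so $f|_{\Span{U, v}}$ is a nonzero degree-$7$ form on $\P^{s+1}$. Serre's bound then gives $\#(X_f \cap \Span{U, v})(\F_q) \leq 7 q^s + p_{s-1}$, and hence
\[
\#\left( X_f(\F_q) - \Span{U, v}(\F_q) \right) \geq p_{s+1} - 7 q^s - p_{s-1} = q^s (q - 6) > 0
\]
when $q > 7$. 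Pick $w_0$ in this difference; the line $L := \Span{v, w_0}$ is then automatically skew to $U$. If $L \not\subset X_f$, we are done; otherwise $L \subset X_f$ and we must push further.

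I would next apply the failure hypothesis at every point $v' \in L(\F_q) \subset (X_f - U)(\F_q)$. As $v'$ varies, the hyperplanes $\Span{U, v'}$ sweep out all $q + 1$ hyperplanes of $\Pi := \Span{U, L}$ containing $U$ (corresponding to $\Pi/U \simeq \P^1$, which the image of $L$ covers since $L$ is skew to $U$). Thus, for any $w' \in X_f(\F_q) - \Pi(\F_q)$, the hypothesis furnishes $q + 1$ distinct lines $\Span{v', w'} \subset X_f$ concurrent at $w'$, all lying in the plane $P := \Span{L, w'}$. Since $q + 1 > \deg f = 7$, the restriction $f|_P$ cannot have this many linear factors unless it vanishes identically, forcing $P \subset X_f$. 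When $s = 1$, $\dim P = 2 > s$ and we already contradict maximality.

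The main obstacle is handling $s \geq 2$, where the plane $P \subset X_f$ does not immediately violate maximality, together with ensuring that some $w' \in X_f(\F_q) - \Pi(\F_q)$ actually exists. For the latter, I expect the proof to invoke an order argument for reduced forms (underlying Proposition~\ref{prop:reduced_facts}(ii)--(iii)): a reduced degree-$7$ form in $n + 1 \geq 50$ variables cannot be supported on a linear subvariety of small dimension, so $X_f \not\subset \Pi$, and then an effective Lang--Weil estimate yields $\F_q$-points of $X_f$ outside $\Pi$. For the former, the plan is to iterate the plane-construction over many $w'$ and argue that the resulting family of planes through $L$, all contained in $X_f$, must span a linear subvariety of $X_f$ of dimension at least $s + 1$, contradicting maximality. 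This is where the strengthening over \cite[Proof of Proposition 5.4]{LeepYeomans1996_quintic_forms} is delicate.
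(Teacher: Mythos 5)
Your first half --- producing $v \in (X_f-U)(\F_q)$ and then $w_0 \in X_f(\F_q) - \Span{U,v}(\F_q)$ so that $\Span{v,w_0}$ is skew to $U$ --- matches the paper (which obtains the second point from Lemma \ref{lem:all_pts_vanish} together with Proposition \ref{prop:reduced_facts}(iii) rather than Serre's bound, but either works). The divergence, and the gap, lies in the contradiction argument when every skew line is contained in $X_f$. Your plan for $s \geq 2$ has two unresolved problems, which you partly acknowledge. First, the existence of $w' \in X_f(\F_q) - \Pi(\F_q)$ does not follow from the tools available: Proposition \ref{prop:reduced_facts}(iii) gives only $\#X_f(\F_q) \geq (q^{s+2}-1)/(q-1)$, while $X_f \cap \Pi$ can a priori contain on the order of $7q^{s+1}$ points, so the count does not close; you would need a genuinely new order/Warning-type input not stated in the paper. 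Second, and more seriously, knowing that many planes through $L$ lie in $X_f$ and together span a large linear space does not imply that any \emph{linear} subvariety of dimension $s+1$ lies in $X_f$ --- a union of planes through a common line is in general a cone, not a linear space --- so the intended contradiction with the maximality of $U$ does not materialize.

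The paper resolves this step differently and uniformly in $s$: under the contradiction hypothesis it introduces a maximal linear subvariety $W \subset X_f$ defined over $\F_q$ with $U \cap W$ of codimension at least $2$ in $W$ (the skew line $\Span{v,w_0} \subset X_f$ shows such $W$ exists), and proves $W \supseteq (X_f - U)(\F_q)$: for any $x \in (X_f - U - W)(\F_q)$, every $\F_q$-point of $\Span{W,x}$ off the proper subspace $\Span{U,x} \cap \Span{W,x}$ lies on a line from $x$ to a point of $W$ avoiding $U$, hence lies in $X_f$; then Lemma \ref{lem:all_pts_vanish}, applied to $\ell \cdot f|_{\Span{W,x}}$ for a suitable linear form $\ell$, forces $\Span{W,x} \subset X_f$, contradicting the maximality of $W$. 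It follows that $X_f(\F_q) = U(\F_q) \cup W(\F_q)$ has at most $2(q^{s+1}-1)/(q-1)$ points, contradicting Proposition \ref{prop:reduced_facts}(iii). If you want to salvage your approach, this auxiliary maximal $W$ is the missing idea.
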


\begin{proof}
    Let $s = \dim U$. We have $\#U(\F_q) = \frac{q^{s+1}-1}{q-1} < X_f(\F_q)$ by Proposition \ref{prop:reduced_facts}(iii), so there exists $v \in X_f(\F_q) - U(\F_q)$. Moreover, we have $\Span{U,v} \not \subset X_f$ by maximality of $U$, and since \qbound{d} an application of Lemma \ref{lem:all_pts_vanish} reveals that $f$ does not vanish on all $\F_q$-points of $\Span{U,v}$. Since $\#\Span{U,v}(\F_q) = \frac{q^{s+2} - 1}{q-1}$, Proposition \ref{prop:reduced_facts}(iii) again ensures there exists a point $w \in (X_f - \Span{U,v})(\F_q)$.

    We now have $\Span{v,w} \cap U = \emptyset$. If $\Span{v,w} \not\subset X_f$ then we are done. Suppose instead that for all $v,w \in (X_f - U)(\F_q)$ such that $\Span{v,w} \cap U = \emptyset$, we have $\Span{v,w} \subset X_f$. Let $W \subset X_f$ be a linear subvariety over $\F_q$ of maximal dimension such that $W \cap U \subset W$ has codimension at least 2. Such $W$ exists because earlier we found a line $\Span{v,w} \subset X_f$ skew to $U$.\footnote{By convention, for $W$ a linear variety of dimension $s$, the codimension of $\emptyset \subset W$ is $s+1$.}

    We claim that $W$ contains $(X_f - U)(\F_q)$. Suppose to the contrary that $x \in (X_f - U - W)(\F_q)$. Since $U \cap W \subset W$ has codimension at least 2, we have that $\Span{U,x} \not\supset \Span{W,x}$. 

    Let $y \in \Span{W,x}(\F_q)$. If $y \notin \Span{U,x} \cap \Span{W,x}$, then $\Span{x,y} \cap U = \emptyset$. Since $y$ is contained on a line between $x$ and a point of $W$, by assumption we have $\Span{x,y} \subset X_f$, and in particular, $y \in X_f(\F_q)$. Thus $f$ vanishes on all of $\Span{W,x}(\F_q)$, save possibly for a a subspace of codimension at least 1 arising from $\Span{U,x} \cap \Span{W,x}$. But then on $\Span{W,x}$, we have that there exists a nonzero linear form $\ell$ such that $\ell f|_{\Span{W,x}}$ vanishes on all points; by Lemma \ref{lem:all_pts_vanish}, $\ell f|_{\Span{W,x}} = 0$. Since $\ell \neq 0$, we have $f|_{\Span{W,x}} = 0$, i.e.\ $\Span{W,x} \subset X_f$. 
    
    Note that the dimension of $U \cap \Span{W,x}$ is at most one more than that of $U \cap W$, so its codimension is still at most 2 in $\Span{W,x}$. However, this violates the maximality of $W$, yielding a contradiction. We conclude that $(X_f - U)(\F_q) \subset W(\F_q)$, as claimed.

    Now we have $X_f(\F_q) = U(\F_q) \cup W(\F_q)$. However, counting $\F_q$-points we see
    \[\#X_f(\F_q) \leq \#U(\F_q) + \#W(\F_q) \leq 2 \frac{q^{s+1}-1}{q-1} < X_f(\F_q),\]
    by Proposition \ref{prop:reduced_facts}(iii), yielding another contradiction. Thus there exist $v,w \in X_f(\F_q) - U(\F_q)$ satisfying the desired hypotheses.
\end{proof}

\begin{proposition}
\label{prop:leep_and_yeomanry}
	Let $d = 7$ and $n \geq 49$. Suppose \qbound{591} and $F \in \O_K[x_0, \ldots, x_n]$ is a reduced form of degree $7$. Then either $X_f(\F_q)^{\sm} \neq \emptyset$ or there exist $y,z \in X_f(\F_q)$ such that $\Span{y,z} \not\subset X_f$ and $X_f \cap \Span{y,z}$ contains a geometric point of multiplicity 1.
\end{proposition}

\begin{proof}
    Our approach is to find noncolinear points $u,v,w \in X_f(\F_q)$ such that $f|_{\Span{u,v,w}}$ is nonzero and has no linear factors over $\F_q$. Then we can apply Lemma \ref{lem:plane_with_3pts}.	Let $U \subset X_f$ be a linear subvariety defined over $\F_q$ of maximal dimension, denoted by $s$. We have $s \geq 0$ by Proposition \ref{prop:reduced_facts}(iii). We treat the $s=0$, $s=1$, and $s \geq 2$ cases separately.

    \bigskip\noindent\textit{Case $s=0$}. If $s=0$, then by Proposition \ref{prop:reduced_facts}(iii) we have $\#X_f(\F_q) \geq q+1$, so there exist at least four $\F_q$-points on $X_f$. If any four lie on a line, then $X_f(\F_q)^\sm \neq \emptyset$ by Lemma \ref{lem:too_many_linears_d=7}. Otherwise, there exist three points $u,v,w \in X_f(\F_q)$ such that $\Span{u,v,w} \simeq \P^2$. Applying Lemma \ref{lem:plane_with_3pts} to $f|_{\Span{u,v,w}}$, we have the result.

    \bigskip\noindent\textit{Case $s=1$}. Let $U \subset X_f$ be a line defined over $\F_q$. By Lemma \ref{lem:skew_points} there exist $v,w \in X_f(\F_q)$ such that the line $\Span{v,w}$ is skew to $U$ and not contained in $X_f$. If for any point $u \in U(\F_q)$ we have that $f$ vanishes on no lines in the plane $\Span{u,v,w}$, then we may apply Lemma \ref{lem:plane_with_3pts}, as in the $s=0$ case. 

    Assume to the contrary that for all $u \in U(\F_q)$ we have that $f$ vanishes on some line in $\Span{u,v,w}$. By Lemma \ref{lem:7.3}, either $X_f(\F_q)^{\sm} \neq \emptyset$ or there are at most two points $u \in U(\F_q)$ such that $\Span{u,v} \subset X_f$, since $\Span{U,v} \not\subset X_f$ in this case as $s=1$. Similarly, there are at most two $u \in U(\F_q)$ for which $\Span{u,w} \subset X_f$, and if there is a third point $x \in (\Span{v,w} \cap X_f)(\F_q)$, at most two $u \in U(\F_q)$ for which $\Span{u,x} \subset X_f$.

    Thus for all but at most 6 points $u \in U(\F_q)$, we have that $f$ does not vanish on any lines between $u$ and a point on $(X_f \cap \Span{v,w})(\F_q) \supset \{v,w\}$. Instead, $f$ must vanish on a line containing either $v$, $w$, or the potential third point $x$ on $(X_f \cap \Span{v,w})(\F_q)$. When \qbound{11}, this guarantees the existence of at least three points $u_1, \ldots, u_3 \in U(\F_q)$ such that $f$ vanishes on a line $V_i \subset \Span{u_i, v, w}$ through a common point $v$, $w$, or $x$. Without loss of generality, assume $v \in V_i$ for $i=1, \ldots, 3$. Note that $V_i \cap U = \emptyset$; if not we find that $w \in \Span{U,v}$, contradicting our choice of $\Span{v,w}$ skew to $U$.

    Suppose the lines $V_1,V_2,V_3$ are coplanar. Recalling that $V_i \cap U = \emptyset$, we have that the plane $\Span{V_1,V_2,V_3}$ meets $U$ at a point not contained in the union of $V_1$, $V_2$, and $V_3$. By Lemma \ref{lem:too_many_linears_d=7}(ii), $X_f(\F_q)^{\sm} \neq \emptyset$.

    Suppose now that $V_1,V_2,V_3$ are not coplanar, hence $\dim \Span{V_1,V_2,V_3} = 3$. Thus they satisfy the hypotheses of Proposition \ref{prop:codim3bert_d=7}, and we reach the desired conclusion.
        
    \bigskip\noindent\textit{Case $s\geq 2$}. By Lemma \ref{lem:skew_points}, there exist $v,w \in X_f(\F_q)$ such that $\Span{v,w} \not\subset X_f$ and is skew to $U$. First, we argue there exists a chain of linear subvarieties defined over $\F_q$,
    \[u = U^{(s)} \subset U^{(s-1)} \subset \ldots \subset U^{(2)} \subset U^{(1)} \subset U^{(0)} = U,\]
    satisfying $\dim U^{(j)} = s-j$ and
    \begin{equation}\label{eq:U_doesnt_pair}
        \Span{U^{(j)}, x} \not\subset X_f \text{ for all } x \in (X_f \cap \Span{v,w})(\F_q).
    \end{equation}
    Here the superscript serves to remind us that $U^{(j)} \subset U$ has codimension $j$. To find $U^{(1)}$, we recognize that $\Span{U,x} \not\subset X_f$ for any $x \in (X_f - U)(\F_q)$, since $\dim U$ is maximal, so applying Lemma \ref{lem:7.3} there are at most two possible codimension 1 $U^{(1)} \subset U$ defined over $\F_q$ for which $\Span{U^{(1)}, x} \subset X_f$ for each $x \in (X_f \cap \Span{v,w})(\F_q)$. Since there are at most three $x \in (X_f \cap \Span{v,w})(\F_q)$ by Lemma \ref{lem:too_many_linears_d=7}, and more than 6 choices of $U^{(1)}$ by our hypotheses on $q$, there exists at least one satisfying \eqref{eq:U_doesnt_pair}. Repeating this argument, we find $U^{(j+1)} \subset U^{(j)}$ satisfying \eqref{eq:U_doesnt_pair}.

    Suppose now that in $\Span{U^{(1)}, v,w} \simeq \P^{s+1}$, $f$ vanishes on no linear subvarieties of dimension $s$. If $s=1$, we have reduced to the $s=0$ case above, so we are done. If $s > 1$, we may replace $U$ by $U^{(1)}$ and repeat the argument of this case.
    
    Suppose to the contrary that for \textit{all} of the $q+1$ codimension 1 $U_i^{(1)} \subset U$ defined over $\F_q$ satisfying $U^{(2)} \subset U_i^{(1)}$ (which implies \eqref{eq:U_doesnt_pair} for $U_i^{(1)}$), we have that $f|_{\Span{U_i^{(1)}, v, w}}$ vanishes on a linear subvariety $V_i$ of dimension $s$. Each $V_i$ meets the line $\Span{v,w}$ at exactly one of the at most three $\F_q$-points of $X_f \cap \Span{v,w}$. Thus whenever \qbound{18}, there exist five such $V_i$ containing the same $\F_q$-point of $X_f \cap \Span{v,w}$. Without loss of generality, assume $v \in V_1, \ldots, V_5$. 

    Since $\dim (V_i \cap \Span{u,v,w}) =1$, we must have that $f$ vanishes on a line containing $v$ in $\Span{u,v,w}$. This line must meet $\Span{u,w}$, but by hypothesis $\Span{u,v}, \Span{v,w} \not\subset X_f$, so there must exist $y \in (X_f \cap \Span{u,w})(\F_q)$ such that $\Span{v,y} \subset V_i$ for all $i$. Note also that the line $\Span{v,y} \cap U = \emptyset$. 

    If $s > 2$, let $V_i^{(3)} = V_i \cap U^{(2)}$, which has dimension $s-3$.\footnote{Here again the superscript serves to remind us that the codimension of $V_i^{(3)} \subset V_i$ is 3.} Since $f$ vanishes on the span with $v$ of at most two codimension 1 linear subvarieties of $U^{(2)}$ by Lemma \ref{lem:7.3}, at least three of the $V_i^{(3)}$ must coincide. Assume that for $V_1, \ldots, V_3$ we have $V_i^{(3)} = V^{(3)}$. Now we have that $V^{(1)} = \Span{V^{(3)}, v, y} \subset V_i$ for $i=1, \ldots, 3$, so in fact the three $V_i$ meet in codimension 1. If $s=2$, then $V^{(3)} = \emptyset$ and $V^{(1)} = \Span{v,y} \subset V_i$ for all $i$, so we reach the same conclusion.

    Suppose $\dim \Span{V_1,V_2, V_3} = s+1$. Then since $U \cap \Span{V_1,V_2,V_3} \neq \emptyset$, by Lemma \ref{lem:too_many_linears_d=7}(ii) we have $X_f(\F_q)^\sm \neq \emptyset$. Thus we must have $\dim\Span{V_1,V_2,V_3} = s+2$, and we may apply Proposition \ref{prop:codim3bert_d=7}.
\end{proof}

A similar approach gives a degree 11 analogue of Proposition \ref{prop:leep_and_yeomanry}. 

\begin{proposition}
\label{prop:leep_and_yeomanry_d=11}
	Let $d = 11$ and $n \geq 121$. Suppose \qbound{7061} and $F \in \O_K[x_0, \ldots, x_n]$ is a reduced form of degree $11$. Then either $X_f(\F_q)^{\sm} \neq \emptyset$ or there exist $y,z \in X_f(\F_q)$ such that $\Span{y,z} \not\subset X_f$ and $X_f \cap \Span{y,z}$ contains a geometric point of multiplicity 1.
\end{proposition}

\begin{proof}
    The proof follows the same lines as that of Proposition \ref{prop:leep_and_yeomanry}, using the intermediates from \S \ref{subsec:d=11} in place of their analogues from \S \ref{subsec:d=7}.
    
    The $s=0$ case follows from applying Proposition \ref{prop:reduced_facts}(iii) and Lemma \ref{lem:too_many_linears_d=11} to show there exist noncolinear $u,v,w \in X_f(\F_q)$, then invoking Lemma \ref{lem:plane_with_3pts_d=11}. We sketch the $s=1$ case below. We omit a justification of the $s \geq 2$ case, as it follows the same lines as the $s \geq 2$ of Proposition \ref{prop:leep_and_yeomanry} with the modifications described herein.

    Suppose $U \subset X_f$ is a line defined over $\F_q$ and $X_f$ contains no linear subvarieties of dimension 2 defined over $\F_q$. By Lemma \ref{lem:skew_points}, there exist $v,w \in X_f(\F_q)$ such that $\Span{v,w} \not\subset X_f$ and $\Span{v,w} \cap U = \emptyset$. As in the proof of Proposition \ref{prop:leep_and_yeomanry}, if there exists $u \in U(\F_q)$ for which $\Span{u,v,w} \cap X_f$ contains no lines defined over $F_q$, we apply Lemma \ref{lem:plane_with_3pts_d=11} to obtain the desired result.

    Assume instead for all $u \in U(\F_q)$ there exists a line in  $X_f \cap \Span{u,v,w}$ defined over $\F_q$. By Lemma \ref{lem:too_many_linears_d=11}(i), there are at most 20 such $u$ for which $u$ itself is contained in a line in $\Span{u,v,w} \cap X_f$. Since \qbound{40}, there exist five points $u_1,\ldots,u_5$ such that the associated lines $V_1,\ldots,V_5$ all contain a common point. Note also that they are all skew to $U$.
    
    Suppose $v \in V_1,\ldots,V_5$ without loss of generality. If these five lines are coplanar, their span intersects $U$ at a point not contained in any $V_i$. Thus by Lemma \ref{lem:too_many_linears_d=11}(ii) we have $X_f(F_q)^\sm = \emptyset$, since $f$ vanishes on no planes. If the five lines are not coplanar, any three of them which are not coplanar satisfy the hypotheses of Proposition \ref{prop:codim3bert_d=11}, giving the desired conclusion.
\end{proof}

\section{Proof of Theorem \ref{thm:deg711}}\label{sec:proof of thmC}

\begin{proof}[Proof of Theorem \ref{thm:deg711}]
	Let $F \in \O_K[x_0, \ldots, x_n]$ be a reduced form and denote by $f \in \F_q[x_0, \ldots, x_n]$ its image under the residue map. If $X_f(\F_q)^{\sm} \neq \emptyset$ then $X_F(K) \neq \emptyset$ by Hensel's lemma. Assume $X_f(\F_q)^{\sm} = \emptyset$. 
	
	\bigskip
	\noindent\textit{Reducible case.} Consider first that $f$ is geometrically reducible and write $f = \prod_{1 \leq i \leq t} f_i^{e_i}$ over $\overline{\F_q}$. Since neither 7 nor 11 can be written as the sum of composites and $f$ has no geometric linear factors by Proposition \ref{prop:reduced_facts}(ii), at least one of the factors $f_i$ is defined over $\F_q$ with $e_i = 1$.
	
	By Theorem \ref{thm:effBertini_existence}(i), there exists $(\bv,\bw,\bz)$ such that $\fvwz$ corresponds to a form with the same factorization type as $f$ (see Example \ref{ex:deg7_reducible}). Rehomogenizing, this corresponds to a plane curve $C \subset X_f$ with a geometrically integral component $C'$ of multiplicity 1 defined over $\F_q$ of degree $d'$ strictly less than 7 (resp.\ 11). Examining the possible factorizations of $f$, we always have such a component of degree $d' \leq 3$ when $d=7$ (resp.\ $d' \leq 7$ when $d=11$).
	
	Applying Lemma \ref{lem:finding_sm_pt} with $d'\leq 3$ (resp.\ $d' \leq 7$) we find that \qbound{19} suffices (resp.\ \qbound{942}) to guarantee $\#C(\F_q)^\sm > 0$. This is more than ensured by the hypotheses on $q$, so we conclude whenever $F$ is a reduced form with $f$ geometrically reducible, we have $X_F(K) \neq \emptyset$.
	
	\bigskip
	\noindent\textit{Irreducible case.} Assume $f$ is geometrically irreducible. By Proposition \ref{prop:leep_and_yeomanry}, we find $y,z \in X_f(\F_q)$ such that $\Span{y,z} \not\subset X_f$ and $f|_{\Span{y,z}}$ vanishes on a point of multiplicity 1, necessarily defined over $\overline{\F_q}$ but not $\F_q$.
	
	Possibly after a change of coordinates in $\GL_{n+1}(\O_K)$, we may assume the $\overline{\F_q}$-points of $X_f$ along the line $\Span{y,z}$ are in the $x_0 \neq 0$ affine patch. This allows us to find $(\bv, \bw) \in \F_q^{2n-1}$ such that 
    \[f_{\bv, \bw, \bz}(X,0) = f(1, X +\v_1,\w_2X +\v_2, \ldots,\w_nX +\v_n)\] 
    vanishes on the same points as $f|_{\Span{y,z}}$. In particular, it vanishes on two distinct (singular) $\F_q$-points and has a (geometric) simple root $\alpha_0$ corresponding to the marked (geometric) point on $X_f$ along $\Span{y,z}$. We now treat the degree 7 and 11 cases separately.
    
    \bigskip
    \noindent\textit{(i) $d=7$.} By Theorem \ref{thm:effBertini_modified}, there is a polynomial $\Psi_3^{(\alpha_0)}$ of degree at most 165 for which $f_{\bv,\bw,\bz}(X,Y)$ has no nonconstant factor of degree at most 3 over $\overline{\F_q}$ vanishing on $(\alpha_0,0)$ whenever $\Psi_3^{(\alpha_0)}(\bz) \neq 0$. By the hypothesis on $q$, one such $\bz$ must exist.
    
    Rehomogenizing, let $C \subset X_f$ denote the plane curve cut out by $f_{\bv,\bw,\bz}$ for such a choice of $\bz$. Let $C' \subset C$ denote the geometrically integral component containing the point corresponding to $(\alpha_0,0)$; we have $\deg C' \in \{4,5,7\}$,\footnote{If $C$ has a factor of degree 6 then it also has a linear factor over $\F_q$, so we find a smooth point.} so it must appear with multiplicity 1 and be defined over $\F_q$. If $\deg C' \neq 7$, then we apply Lemma \ref{lem:finding_sm_pt} to find that \qbound{159} suffices to ensure $C(\F_q)^\sm \neq \emptyset$ (the limiting case here is $C'$ a smooth quintic curve).    
    
	It remains to consider the case of $C' = C$ an irreducible degree 7 curve with at least two $\F_q$-points coming from $y$ and $z$. If either of these points are smooth, we are done. If they are both singular, then the geometric genus of $C$ is at most 13 (a smooth plane degree 7 plane curve has genus 15). Again applying \eqref{eq:ns_pt_bound}, with $d'=7$ and $g' \leq 13$, we find $\#C(\F_q)^{\sm} > 0$ whenever \qbound{679}.
    
    Under the hypotheses \qbound{679} and $F$ reduced, we have $X_f(\F_q)^{\sm} \neq \emptyset$, therefore $X_F(K) \neq \emptyset$. It is enough to check this only on reduced forms $F$ by Proposition \ref{prop:reduced_facts}(i), so we conclude $K$ is $C_2(7)$ when \qbound{679}.
        
    \bigskip
    \noindent\textit{(ii) $d=11$.} We follow the same approach. Theorem \ref{thm:effBertini_modified} gives $\Psi_{5}^{(\alpha_0)}$ of degree at most 945 for which $f_{\bv,\bw,\bz}(X,Y)$ has no nonconstant factor of degree at most 5 over $\overline{\F_q}$ vanishing on $(\alpha_0,0)$ whenever $\Psi_5^{(\alpha_0)}(\bz) \neq 0$. 
    
    As above, the assumptions on $q$ guarantee existence of one such $\bz$, and $f_{\bv,\bw,\bz}$ corresponds to a plane curve $C \subset X_f$ with at least two $\F_q$-points and a geometrically integral component $C'$ of degree $d' \in \{6,7, 8, 9, 11\}$. If $d' \neq 11$, \qbound{3153} suffices to guarantee a smooth point (the limiting case is $C'$ a smooth curve of degree 9).
    
    If $d'=11$ we have $g' \leq 43$ and applying \eqref{eq:ns_pt_bound} gives $\#C(\F_q)^\sm > 0$ whenever \qbound{7393}. Therefore $X_F(K) \neq \emptyset$ for all reduced $F$, and $K$ is $C_2(11)$ whenever \qbound{7393}.
\end{proof}

\bibliography{loc_sol}
\bibliographystyle{alpha}

\end{document}